 \definecolor{cupgreen}{rgb}{0,0.498,0.208}
  \definecolor{cupblue}{rgb}{0,0,.5}
  \definecolor{cupred}{rgb}{1,0.04,0}
  \definecolor{cuppink}{rgb}{0.925,0,0.545}
  \definecolor{cupmagenta}{rgb}{0.624,0.161,0.424}
  \definecolor{cupbrown}{rgb}{0.71,0.212,0.133}
  \definecolor{cupgreen}{rgb}{0,0,0}
  \definecolor{cupblue}{rgb}{0,0,0}
  \definecolor{cupred}{rgb}{0,0,0}
  \definecolor{cuppink}{rgb}{0,0,0}
  \definecolor{cupmagenta}{rgb}{0,0,0}
  \definecolor{cupbrown}{rgb}{0,0,0}
\definecolor{TITLE}{rgb}{0,0,0}
\definecolor{AUTHOR1}{rgb}{0.00,0.59,0.00}
\definecolor{AUTHOR2}{rgb}{0.50,0.00,1.00}
\definecolor{SECTION}{rgb}{0.50,0.00,1.00}
\definecolor{FOOTTITLE}{rgb}{0.00,0.50,0.75}
\definecolor{THM}{rgb}{0.8,0,0.1}
\definecolor{SEC}{rgb}{0,0,1}
\newtheorem{theorem}{{\color{THM} Theorem}}[section]
\title[Partitions with parts in a finite set]{Partitions with parts in a finite set and the non-intersecting circles problem}
\newtheorem{lemma}[theorem]{{\color{THM}Lemma}}
\theoremstyle{definition}
\newtheorem{conjecture}[theorem]{{\color{THM}Conjecture\ }}
\numberwithin{equation}{section}
\begin{document}

\newbox\Adr
\setbox\Adr\vbox{
\centerline{\sc Daniel Yaqubi$^1$ and Madjid Mirzavaziri$^2$}
\vskip18pt
\centerline{$^{1,2}$Department of Pure Mathematics, Ferdowsi University of Mashhad}
\centerline{P. O. Box 1159, Mashhad 91775, Iran.}
\centerline{Email$^{1}$: {\tt daniel\_yaqubi@yahoo.es } }
\centerline{Email$^{2}$: {\tt mirzavaziri@um.ac.ir}}}

\title{Some divisibility properties of binomial coefficients}

\author[ Daniel Yaqubi and Madjid Mirzavaziri ]{\box\Adr}

\keywords{Binomial coefficients, $p$-adic valuation, Lucas' theorem, Euler's totient theorem, Bernoulli numbers.}

\subjclass[2010]{Primary 11B65; Secondary 05A10}

\maketitle
\begin{abstract}
In this paper, we aim to give full proofs or partial answers for the following three conjectures of V. J. W. Guo and C. Krattenthaler: (1) Let $a>b$ be positive integers, $\alpha,\beta$ be any integers and $p$ be a prime satisfying $\gcd(p,a)=1$. Then there exist infinitely many positive integers $n$ for which $\binom{an+\alpha}{bn+\beta}\equiv r\pmod p$ for all integers $r$; (2) For any odd prime $p$, there are no positive integers $a>b$ such that $\binom{an}{bn}\equiv0\pmod{pn-1}$ for all $n\geq1$; (3) For any positive integer $m$, there exist positive integers $a$ and $b$ such that $am>b$ and $\binom{amn}{bn}\equiv0\pmod{an-1}$ for all $n\geq1$. Moreover, we show that for any positive integer $m$, there are positive integers $a$ and $b$ such that 
${amn \choose bn}\equiv 0\quad (\mbox{\rm mod~} an-a)$ for all $n\geqslant 1$.
\end{abstract}
\section{introduction}%==================================================

Binomial coefficients constitute an important class of numbers that arise naturally in mathematics, namely as coefficients in the expansion of the polynomial $(x+y)^n$. Accordingly, they appear in various mathematical areas. An elementary property of binomial coefficients is that $\binom{n}{m}$ is divisible by a prime $p$ for all $1<m<n$ if and only if $n$ is a power of $p$. A much more technical result is due to Lucas, which asserts that
\[\binom{n}{m}\equiv\binom{n_0}{m_0}\binom{n_1}{m_1}\cdots\binom{n_k}{m_k}\pmod p,\]
in which $n=n_0+n_1p+\cdots+n_kp^k$ and $m=m_0+m_1p+\cdots+m_kp^k$  the $p$-adic expansions of the non-negative integers $n$ and $m$, respectively. We note that $0\leq m_i,n_i<p$, for all $i=0,\ldots,k$. In 1819, Babbage  \cite{Bab} revealed the following congruences for all odd prime $p$:
\[\binom{2p-1}{p-1}\equiv1\pmod{p^2}.\]
In 1862, \textit{ Wolstenholme} \cite{Wolstenholme} strengthened the identity of Babbage by showing that the same congruence holds modulo $p^3$ for all prime $p\geq5$. This identity was further generalized by Ljunggren in 1952 to $\binom{np}{mp}\equiv\binom{n}{m}\pmod{p^3}$ and even more to $\binom{np}{mp}/\binom{n}{m}\equiv1\pmod{p^q}$ by Jacobsthal for all positive integers $n>m$ and primes $p\geq5$, in which $p^q$ is any power of $p$ dividing $p^3mn(n-m)$. Note that the number $q$ can be replaced by a large number if $p$ divides $B_{p-1}$, the $(p-3)$'th Bernoulli number. Arithmetic properties of binomial coefficients are studied extensively in the literature and we may refer the interested reader to \cite{3} for an account of Wolstenholme's theorem. Recently,  \textit{Guo} and \textit{Krattenthaler} \cite{KRA} studied a similar problem and proved the following conjecture of \textit{Sun} \cite{Sun3}.
%-----------------------------------
\begin{theorem}
Let $a$ and $b$ be positive integers. If $bn+1$ divides $\binom{an+bn}{an}$ for all sufficiently large positive integers $n$, then each prime factor of $a$ divides $b$. In other words, if $a$ has a prime factor not dividing $b$, then there are infinitely many positive integers $n$ for which $bn+1$ does not divide $\binom{an+bn}{an}$.
\end{theorem}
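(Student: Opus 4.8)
The plan is to prove the statement in its contrapositive form: assuming $a$ has a prime factor $p$ with $p\nmid b$, I will exhibit an explicit infinite family of $n$ for which $bn+1\nmid\binom{an+bn}{an}$. The guiding observation is that it suffices to find, for infinitely many $n$, a single prime dividing $bn+1$ that fails to divide the binomial coefficient: if $p\mid bn+1$ but $p\nmid\binom{an+bn}{an}$, then $bn+1$ cannot divide $\binom{an+bn}{an}$. The natural candidate for that prime is $p$ itself, so the whole problem reduces to producing infinitely many $n$ with $p\mid bn+1$ and $p\nmid\binom{an+bn}{an}$.

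The key idea is to choose $n$ so that $(a+b)n=p^{M}-1$. First I would note that $\gcd(a+b,p)=1$, since $p\mid a$ and $p\nmid b$ give $a+b\equiv b\not\equiv 0\pmod p$. Hence by Euler's totient theorem $p^{\phi(a+b)}\equiv 1\pmod{a+b}$, so for every positive multiple $M$ of $\operatorname{ord}_{a+b}(p)$ the integer $n:=(p^{M}-1)/(a+b)$ is well defined; these values are distinct, giving infinitely many candidates.

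Two things then have to be verified, and both fall out almost for free from the choice $(a+b)n=p^{M}-1$. For the divisibility $p\mid bn+1$: reducing $(a+b)n=p^{M}-1\equiv -1\pmod p$ and using $an\equiv 0\pmod p$ (because $p\mid a$) yields $bn\equiv -1\pmod p$, that is, $p\mid bn+1$. For the non-divisibility $p\nmid\binom{an+bn}{an}$, I would invoke Lucas' theorem from the introduction. The base-$p$ expansion of $(a+b)n=p^{M}-1$ has all $M$ of its digits equal to $p-1$, the maximal possible digit; consequently every digit of $an$ is automatically at most the corresponding digit of $(a+b)n$, and Lucas gives $\binom{an+bn}{an}\equiv\prod_{i}\binom{p-1}{(an)_i}\not\equiv 0\pmod p$, since each factor $\binom{p-1}{(an)_i}$ with $0\le (an)_i\le p-1$ is nonzero modulo $p$.

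Combining these, for each admissible $M$ the prime $p$ divides $bn+1$ but not $\binom{an+bn}{an}$, so $bn+1\nmid\binom{an+bn}{an}$; letting $M$ range over the positive multiples of $\operatorname{ord}_{a+b}(p)$ produces the required infinitude. The only real ingenuity lies in the opening move, and this is where I expect the difficulty to be: recognizing that demanding $(a+b)n=p^{M}-1$ simultaneously forces the congruence $bn\equiv -1\pmod p$ (via $p\mid a$) and trivializes the Lucas condition (via all digits being maximal). Once that choice is made the verification is routine, and in particular it is worth emphasizing that the hypothesis $p\mid a$ is exactly what makes $bn\equiv -1\pmod p$ come for free.
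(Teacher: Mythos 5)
Your proof is correct. Note, though, that there is nothing in this paper to compare it against: the statement is the paper's Theorem 1.1, which is quoted without proof from Guo and Krattenthaler \cite{KRA}, where it settles a conjecture of Sun. Your argument --- take $p\mid a$, $p\nmid b$, observe $\gcd(a+b,p)=1$, and set $n=(p^{M}-1)/(a+b)$ for $M$ a multiple of the order of $p$ modulo $a+b$, so that $p\mid bn+1$ falls out of $p\mid an$ while all base-$p$ digits of $(a+b)n$ equal $p-1$, whence Lucas' theorem gives $\binom{an+bn}{an}\equiv\prod_i\binom{p-1}{(an)_i}\equiv\pm1\not\equiv0\pmod p$ --- is essentially the standard one (Guo and Krattenthaler phrase the digit step via Kummer's carry-counting theorem rather than Lucas, but the content is identical: a sum with all digits maximal admits no carries). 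Every step checks: the order of $p$ modulo $a+b$ exists since $a+b\equiv b\not\equiv0\pmod p$, distinct exponents $M$ give distinct $n$, and a single prime $p$ dividing $bn+1$ but not the binomial coefficient already blocks the divisibility.
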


They also stated several conjectures among which are the following, which we
aim to give full proofs for two conjectures and a partial answer for one of them.
%========================================
\begin{conjecture}[{\cite[Conjecture 7.1]{KRA}}]\label{Con1}
Let $a>b$ be positive integers, $\alpha,\beta$ be any integers and $p$ be a prime satisfying $\gcd(p,a)=1$. Then there exist infinitely many positive integers $n$ for which 
\[\binom{an+\alpha}{bn+\beta}\equiv r\pmod p\]
for all integers $r$. 
\end{conjecture}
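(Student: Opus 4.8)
The plan is to read $\binom{an+\alpha}{bn+\beta}$ through Lucas' theorem and to split the statement into two sub-problems: hitting each residue class at least once, and then upgrading ``once'' to ``infinitely often''. Write $f(n)\equiv\binom{an+\alpha}{bn+\beta}\pmod p$ and $g(u)\equiv\binom{au}{bu}\pmod p$, and let $A_i,B_i$ denote the base-$p$ digits of $an+\alpha$ and $bn+\beta$, so that $f(n)\equiv\prod_i\binom{A_i}{B_i}\pmod p$. The engine of the argument is a multiplicativity coming from digit concatenation: if $0\le n^*<p^k$ and both $an^*+\alpha$ and $bn^*+\beta$ lie in $[0,p^k)$, then writing $n=n^*+p^ku$ one checks that the lowest $k$ digits of $an+\alpha$ and of $bn+\beta$ are exactly those of $an^*+\alpha$ and $bn^*+\beta$, while the digits in positions $\ge k$ are those of $au$ and $bu$; hence
\[ f(n^*+p^ku)\equiv f(n^*)\,g(u)\pmod p. \]
The same identity applied with $\alpha=\beta=0$ shows that $g$ itself concatenates: $g(u^*+p^kv)\equiv g(u^*)g(v)$ whenever $au^*<p^k$ and $bu^*<p^k$.

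First I would dispose of the upgrade step by producing infinitely many $u$ with $g(u)\equiv1\pmod p$. Since $\gcd(a,p)=1$, the multiplicative order $M=\operatorname{ord}_a(p)$ of $p$ modulo $a$ is well defined and $a\mid p^M-1$; put $u_0=(p^M-1)/a$, so that $au_0=p^M-1$ has every base-$p$ digit equal to $p-1$. As $bu_0<p^M$, Lucas together with the identity $\binom{p-1}{j}\equiv(-1)^j\pmod p$ give $g(u_0)\equiv(-1)^{s_p(bu_0)}\in\{+1,-1\}$, where $s_p$ is the base-$p$ digit sum. Concatenating an even number of copies of the block $u_0$ through the multiplicativity of $g$ (legitimate since $au_0=p^M-1<p^M$) yields, for every $t\ge1$, an integer $u_0\bigl(1+p^M+\cdots+p^{(2t-1)M}\bigr)$ whose $g$-value equals $g(u_0)^{2t}\equiv1$. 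These are infinitely many $u$ with $g(u)\equiv1$. Consequently, once I exhibit a single $n^*$ with $f(n^*)\equiv r$, the numbers $n=n^*+p^ku$ (with $k$ chosen so that $an^*+\alpha$ and $bn^*+\beta$ lie in $[0,p^k)$, and $u$ running over the infinite family above) satisfy $f(n)\equiv r\cdot1\equiv r$; for $r\equiv0$ the same $n^*$ works with any $u$, since then $f(n)\equiv0$ regardless.

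It therefore remains to prove surjectivity: every residue is $f(n^*)$ for at least one $n^*$. The residue $0$ is easy: choosing $n^*\bmod p$ via $\gcd(a,p)=1$ so that $an^*+\alpha\equiv0$ while $bn^*+\beta\not\equiv0\pmod p$ forces a digit violation in position $0$, whence $f(n^*)\equiv0$ (the few degenerate cases, e.g.\ $p\mid b$, being handled by moving the violation to a higher position). For the nonzero residues the natural idea is again to prescribe the digits of $N=an+\alpha$: using $\gcd(a,p)=1$ one can set $N$ equal to any integer in the class $\alpha\pmod a$, in particular one whose top digits are all $p-1$ (contributing only a sign $\pm1$ via $\binom{p-1}{j}\equiv(-1)^j$) while a short low block is reserved both to fix the class modulo $a$ and to carry a ``tunable'' factor.

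The subtlety, and the part I expect to be the main obstacle, is that $M=bn+\beta$ is \emph{not} free: once $N$ is fixed one has $M=(bN+a\beta-b\alpha)/a$, so along the available family the pair of lowest digits traces a single line $B\equiv\lambda A+\mu$ with $\lambda=b\bar a$, $\mu=\beta-b\bar a\alpha$, rather than filling all pairs, and a pure $(p-1)$-run only ever yields $\pm1$. Breaking past $\pm1$ requires controlling at least one digit of $M$ and showing that the resulting set of attainable factors — which is closed under multiplication by the concatenation identity above — contains a primitive root modulo $p$, after which $\{0\}\cup\langle\text{primitive root}\rangle=\mathbb{Z}/p\mathbb{Z}$ closes the proof. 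Establishing that a primitive root occurs, equivalently that this multiplicatively closed value set is confined to no proper subgroup, is where the real work lies; I would attack it by a direct base-$p$ construction that tunes a single low factor $\binom{c}{1}\equiv c$ through the relation $M=(bN+a\beta-b\alpha)/a$, choosing $c$ to be a primitive root and absorbing the accompanying sign by adjusting the parity of the $(p-1)$-run.
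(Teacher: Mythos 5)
Your two structural lemmas are sound and in fact mirror the machinery the paper itself uses: the concatenation identity $f(n^*+p^ku)\equiv f(n^*)\,g(u)\pmod p$ is correct under your digit bounds, and the block $u_0=(p^M-1)/a$ with $M=\operatorname{ord}_a(p)$ really does produce infinitely many $u$ with $g(u)\equiv 1$, so the upgrade from ``once'' to ``infinitely often'' works (the paper achieves the same effect via powers $p^{u}$ with $u=N\varphi(a)$ for arbitrary $N$). The fatal problem is the step you yourself flag as where the real work lies: the claim that the multiplicatively closed set of attainable factors always contains a primitive root modulo $p$. That claim is false --- and so is the statement you are trying to prove. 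It is a conjecture of Guo and Krattenthaler which Vsemirnov disproved for $p=5$, as this paper explicitly records in Section 4; accordingly the paper never proves the conjecture, only a partial theorem under the extra hypothesis $p>a+2b$, and even then its case (ii) (when $\gcd(p-1,a)\geqslant a/\gcd(a,b)$ and $v_2(a)>v_2(p-1)$) must exclude an explicit set of residues $r$.

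Concretely, take $p=5$, $a=4$, $b=2$, $\alpha=\beta=0$, so $\gcd(p,a)=1$. Writing $m=2n$, Kummer's theorem gives $\binom{4n}{2n}\equiv 0\pmod 5$ unless every base-$5$ digit $m_i$ of $m$ lies in $\{0,1,2\}$, in which case Lucas gives
\[\binom{4n}{2n}\equiv\prod_i\binom{2m_i}{m_i}\equiv 2^{\#\{i:\;m_i=1\}}\pmod 5,\]
using $\binom{0}{0}=1$, $\binom{2}{1}=2$, $\binom{4}{2}\equiv 1$. But since $5\equiv 1\pmod 2$, the parity of the digit sum of $m$ equals the parity of $m$; as $m=2n$ is even and the digits $0$ and $2$ contribute nothing modulo $2$, the number of digits equal to $1$ must be \emph{even}. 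Hence $\binom{4n}{2n}\equiv 0,1$ or $4\pmod 5$ for all $n$: the value set is $\{0\}\cup\{\pm 1\}$, a proper subgroup of $(\mathbb{Z}/5\mathbb{Z})^{\times}$ together with zero, and the residues $2,3$ are never attained. This parity linkage is exactly the mechanism your endgame overlooks: you cannot tune a single low factor $\binom{c}{1}\equiv c$ independently, because the digits of $bn+\beta$ are globally constrained by the relation $a\mid bN+a\beta-b\alpha$ (here, by the evenness of $m$), and inserting one extra contributing digit forces a compensating change that keeps the product inside the subgroup. So the proposal cannot be completed as stated; at best, your correct concatenation lemmas could be combined with hypotheses like the paper's ($p>a+2b$, together with $e<c$ or $v_2(a)\leqslant v_2(p-1)$) to recover a partial result of the type the paper actually proves.
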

%-----------------------------------
\begin{conjecture}[{\cite[Conjecture 7.2]{KRA}}]\label{Con2}
For any odd prime $p$, there are no positive integers $a>b$ such that 
\[\binom{an}{bn}\equiv0\pmod{pn-1}\]
for all $n\geq1$.
\end{conjecture}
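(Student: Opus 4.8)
The plan is to argue by contradiction: suppose $p$ is an odd prime and $a>b\geq1$ are integers with $pn-1\mid\binom{an}{bn}$ for every $n\geq1$, and then produce a single $n$ violating this. The basic reduction is that it suffices to find one $n$ together with a prime $q\mid pn-1$ for which $q\nmid\binom{an}{bn}$, since then $pn-1\nmid\binom{an}{bn}$. By Lucas' theorem, $q\nmid\binom{an}{bn}$ is equivalent to every base-$q$ digit of $bn$ being at most the corresponding digit of $an$, and Kummer's theorem reinterprets $v_q\binom{an}{bn}$ as the number of carries when adding $bn$ and $(a-b)n$ in base $q$. Throughout I write $a=pk+s$ and $b=pk'+s'$ with $0\leq s,s'<p$; from $a>b$ one checks $k\geq k'$.

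The engine is the following choice. For a prime $q$ with $\gcd(q,p)=1$ set $n_{0}=p^{-1}\bmod q\in\{1,\dots,q-1\}$; then $pn_{0}-1$ is a positive multiple of $q$, so $q\mid pn_{0}-1$. If moreover $q>a$, then $an_{0},bn_{0}<q^{2}$ have exactly two base-$q$ digits, the high digits satisfy $\lfloor an_{0}/q\rfloor\geq\lfloor bn_{0}/q\rfloor$ automatically because $an_{0}\geq bn_{0}$, and $q\nmid\binom{an_{0}}{bn_{0}}$ reduces to the single inequality $(an_{0}\bmod q)\geq(bn_{0}\bmod q)$. To control these low digits I restrict $q$ by Dirichlet's theorem to a class $q\equiv -t^{-1}\pmod p$ for a fixed $t\in\{1,\dots,p-1\}$; this forces $n_{0}=(qt+1)/p$, and a direct computation gives $an_{0}=\mu q+(\rho q+a)/p$ with $\rho=at\bmod p$, so the low digit $an_{0}\bmod q$ is asymptotic to $\tfrac{(at\bmod p)}{p}\,q$ as $q\to\infty$. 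Hence for large such $q$ the required inequality holds precisely when $(at\bmod p)\geq(bt\bmod p)$, the tie $(at\bmod p)=(bt\bmod p)$ being resolved in the right direction by $a>b$.

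It remains to exhibit a good type $t$. If $a\bmod p\geq b\bmod p$ (that is, $s\geq s'$), then $t=1$ already works, the tie $s=s'$ broken by $a>b$; this covers in particular the cases $p\mid b$ and $p\mid a,\ p\mid b$, and settles everything with $a\bmod p\geq b\bmod p$. If instead $s<s'$ but $p\nmid a$, then $s'>s\geq1$ forces $p\nmid b$ as well, so both $at$ and $bt$ run through permutations of $\{1,\dots,p-1\}$; thus $\sum_{t=1}^{p-1}\big((at\bmod p)-(bt\bmod p)\big)=0$ with summands not all zero (otherwise $a\equiv b\pmod p$, contradicting $s\neq s'$), and some $t$ gives a strictly positive term. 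Either way the construction of the previous paragraph yields the required $n=n_{0}$, settling every case with $\gcd(p,a)=1$.

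The main obstacle is the complementary case $p\mid a$ together with $p\nmid b$ (so $s=0<s'$). Here $at\equiv0\pmod p$ for all $t$, so the low digit $an_{0}\bmod q$ stays bounded (equal to $a/p$ once $q$ is large) while $bn_{0}\bmod q$ grows linearly in $q$; consequently \emph{no} large prime modulus can work and the two-digit Lucas argument collapses. I expect to handle this residual case by passing to a prime power: since $p$ is odd, $pn-1$ is even for odd $n$, and more generally one can force $q^{M}\mid pn-1$ by taking $n\equiv p^{-1}\pmod{q^{M}}$, so it would suffice to find a prime $q$ and an $n$ in this progression with $v_{q}(pn-1)>v_{q}\binom{an}{bn}$, i.e. with strictly fewer than $M$ base-$q$ carries in $bn+(a-b)n$. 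The delicate point, which I expect to be the crux of the whole argument, is that for a generic $n$ the carry count is comparable to $M$ rather than below it; controlling it will require engineering the low base-$q$ digits of $bn$ (equivalently the residues $bp^{-1}\bmod q^{i}$) to be small for many consecutive $i$, which calls for a careful exploitation of the congruence freedom in choosing $n$ rather than the least residue.
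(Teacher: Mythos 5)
You should first note that this statement is quoted in the paper as an open conjecture of Guo and Krattenthaler, and the paper itself resolves it only partially; the honest comparison is with Section~2 of the paper. On the portion you complete, your argument is correct and is essentially the paper's own method. Your engine --- pick a large prime $q\equiv -t^{-1}\pmod p$ by Dirichlet, set $n_0=(qt+1)/p$ so that $q\mid pn_0-1$, observe that $an_0,bn_0<q^2$ so Lucas reduces $q\nmid\binom{an_0}{bn_0}$ to the single low-digit inequality, and compute the low digits exactly as $(\rho q+a)/p$ and $(\rho' q+b)/p$ with $\rho=at\bmod p$, $\rho'=bt\bmod p$ --- is the same mechanism as the first theorem of Section~2: there the authors take $s\equiv b^{-1}$, $t\equiv -s^{-1}\pmod p$, let $pn+t$ run over primes (Dirichlet again), and set $K=pn+ns+t+k$ so that $pK-1=(pn+t)(p+s)$, then apply Lucas modulo $pn+t$ to the two-digit expansions of $aK$ and $bK$; your $q$ is their $pn+t$, and your cofactor $t$ is their $p+s$. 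Two minor differences are in your favor: your selection of the residue class via the averaging identity $\sum_{t=1}^{p-1}\big((at\bmod p)-(bt\bmod p)\big)=0$ is cleaner and more transparent than their explicit inverse bookkeeping, and your tie case $\rho=\rho'$ also disposes of $p\mid a$, $p\mid b$, which the paper's theorem excludes outright by its hypothesis $a\not\equiv0\pmod p$.

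The gap is the one you flag yourself: $p\mid a$, $p\nmid b$. Your final paragraph is a plan, not a proof; nothing in it controls the number of base-$q$ carries once a prime power $q^{M}$ is forced into $pn-1$, and, as you say, for generic $n$ that carry count is of order $M$. You should know that this is exactly where the paper stops as well: its Theorem~\ref{main} treats $a=cp$, $b=pk+r$ only under extra size restrictions (e.g.\ $pk+r\leqslant cp(1-\gamma)$ with $c$ above a threshold $M_p(\cdot)$, or a second regime requiring $r\leqslant\frac{p-3}{2}$ and $k$ large), and the input it needs there is stronger than Dirichlet: primes $\equiv-1\pmod p$ in short intervals $(x,(1+\varepsilon)x)$, used to place $pn-1$ itself as a prime in a window where the base-$(pn-1)$ digits of $(pk+r)n$ stay below those of $pcn$. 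So your instinct about where the real difficulty lies is accurate, and your short-interval-free part matches (indeed slightly extends) what the paper's unconditional theorem covers; but as submitted, the proposal proves the statement only when $\gcd(a,p)=1$ or $p\mid\gcd(a,b)$, and the conjecture itself remains open both in your write-up and in the paper.
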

%-----------------------------------
\begin{conjecture}[{\cite[Conjecture 7.3]{KRA}}]\label{Con3}
For any positive integer $m$, there exist positive integers $a$ and $b$ such that $am>b$ and 
\[\binom{amn}{bn}\equiv0\pmod{an-1}\]
for all $n\geq1$. 
\end{conjecture}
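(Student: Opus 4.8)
The plan is to choose $a$ and $b$ judiciously and then verify the divisibility prime-by-prime using Kummer's theorem, which expresses $v_p\!\left(\binom{amn}{bn}\right)$ as the number of carries occurring when $bn$ and $(am-b)n$ are added in base $p$. Since the asserted congruence is just the divisibility $(an-1)\mid\binom{amn}{bn}$, equivalently $v_p\binom{amn}{bn}\ge v_p(an-1)$ for every prime $p$ and every $n$, I would split the primes into two regimes according to whether $p\le m$ or $p>m$.

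For the small primes I would insist from the outset that every prime $p\le m$ divides $a$. Then $an\equiv 0$, so $an-1\equiv -1\pmod p$, whence $v_p(an-1)=0$ and there is nothing to check. This disposes by fiat of the genuinely awkward case, in which $m\ge p$ makes the base-$p$ digits of $amn$ hard to control. For a large prime $p>m$ with $p\nmid a$, fix an $n$ with $p\mid an-1$ and set $j=v_p(an-1)$. The crucial simplification is that $an\equiv 1\pmod{p^j}$ forces $amn\equiv m$ and $bn\equiv b\,a^{-1}\pmod{p^j}$ (here $a^{-1}$ is the inverse of $a$ modulo the relevant power), so the residues that matter depend on $a,b,m$ alone and not on the particular $n$. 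A short computation with $\lfloor x+y\rfloor-\lfloor x\rfloor-\lfloor y\rfloor\in\{0,1\}$ shows that, because $m<p$, a carry occurs at level $i\le j$ precisely when $bn\bmod p^i>m$; and since $bn\bmod p^i$ dominates its own units digit $bn\bmod p$, the single inequality $bn\bmod p>m$ already produces carries at all of the levels $i=1,\dots,j$, giving $v_p\binom{amn}{bn}\ge j$. Translating $bn\bmod p=b\,a^{-1}\bmod p$, the one condition I must arrange is
\[
b\,a^{-1}\not\equiv 0,1,\dots,m\pmod p\quad\Longleftrightarrow\quad p\nmid(ra-b)\ \text{ for } r=0,1,\dots,m.
\]

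It remains to exhibit $a,b$ meeting all the constraints at once, and this is where I expect the real work to lie. Writing $a=g\alpha$ and $b=g\beta$ with $\gcd(\alpha,\beta)=1$ and $\alpha>\beta\ge 1$, one has $ra-b=g(r\alpha-\beta)$, and since $\gcd(r\alpha-\beta,\alpha)=1$ the primes that could violate the displayed condition are confined to the \emph{fixed} integers $\beta$ and $r\alpha-\beta$ for $1\le r\le m$. I would therefore take $g$ divisible by every prime $\le m$ together with every prime exceeding $m$ that divides one of these finitely many integers; then any prime $p\nmid a$ is automatically $>m$ and coprime to each $r\alpha-\beta$, the condition holds, and the requirements $b<a$ and $am>b$ are immediate. (For $m=1$ this recovers $a=2$, $b=1$, i.e.\ the classical $2n-1\mid\binom{2n}{n}$.)

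The delicate points to nail down, rather than the construction itself, are the crux. First, the carry-counting at the levels $i=1,\dots,j$ must be carried out carefully enough to be sure that no cancellation erodes the valuation below $j$, and that the reduction $amn\equiv m\pmod{p^i}$ is legitimately used only when $m<p^i$. Second, one must confirm that forcing the small primes into $a$ leaves no stray large prime dividing $an-1$ but not $a$ — which is exactly guaranteed by the choice of $g$, since every prime factor of $a-1$ (and of $an-1$ in general) coprime to $a$ is then $>m$ and avoids all the $r\alpha-\beta$. Assembling these two regimes yields $v_p\binom{amn}{bn}\ge v_p(an-1)$ for all $p$ and all $n\ge 1$, which is the desired statement.
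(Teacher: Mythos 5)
Your proposal is correct and is essentially the paper's own argument: the paper likewise works prime power by prime power in $an-1$, exploits $an\equiv 1\pmod{q^{\alpha}}$ to get $amn\equiv m$ and $bn\equiv ba^{-1}\pmod{q^{\alpha}}$, and forces the last base-$q$ digit of $bn$ to exceed $m$ so that a carry occurs at every level $i\le\alpha$; its concrete choice $a=2\cdot3\cdot5\cdots p_t$ with $p_t>3m$ and $b=\tfrac{2}{3}a$ is exactly an instance of your $(g,\alpha,\beta)$-scheme with $\alpha=3$, $\beta=2$. The only superficial differences are that the paper counts the carries through Legendre's formula rather than quoting Kummer's theorem, and that your construction is stated in slightly greater generality.
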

%==================================================

Moreover, we show that for any positive integer $m$, there are positive integers $a$ and $b$ such that 
${amn \choose bn}\equiv 0\quad (\mbox{\rm mod~} an-a)$ for all $n\geqslant 1$.

\section{Conjecture  \ref{Con2}} 
Our first result is a more precise version of Conjecture \ref{Con2} in this case that $a\not\equiv 0~  (\mbox{\rm mod~} p)$ and we obtain some divisibility property of binomial coefficients. 
\begin{theorem}
For any odd prime $p$, there are no positive integers $a>b$ with $a\not\equiv 0~  (\mbox{\rm mod~} p)$ such that
\[{an \choose bn}\equiv 0\quad (\mbox{\rm mod~} pn-1),\]
for all $n\geqslant 1$.
\end{theorem}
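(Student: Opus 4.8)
The plan is to fix the pair $a>b$ with $p\nmid a$ and to exhibit a single positive integer $n$ for which $pn-1$ fails to divide $\binom{an}{bn}$; this already contradicts divisibility for all $n\geq1$. The mechanism is to locate a prime $q$ with $q\mid pn-1$ but $q\nmid\binom{an}{bn}$, the latter non-divisibility being detected by Lucas' theorem. Since I am free to choose $q$, I will take it in a carefully chosen residue class modulo $p$, whose existence is guaranteed by Dirichlet's theorem on primes in arithmetic progressions (applicable because $\gcd(a,p)=1$, as $p$ is prime and $p\nmid a$).

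Concretely, first I would invoke Dirichlet to select a prime $q\equiv a\pmod p$ with $q>a$ (automatically $q\neq p$, since $a\not\equiv0\pmod p$). Next, let $j\in\{1,\dots,p-1\}$ be determined by $ja\equiv-1\pmod p$, and set $n_0=(jq+1)/p$. Because $q\equiv a\pmod p$ one has $jq+1\equiv ja+1\equiv0\pmod p$, so $n_0$ is a positive integer, and plainly $pn_0=jq+1\equiv1\pmod q$, that is, $q\mid pn_0-1$. Moreover $n_0\leq\big((p-1)q+1\big)/p<q$, so $an_0<aq<q^2$ and $bn_0<bq<q^2$, whence both $an_0$ and $bn_0$ have at most two base-$q$ digits.

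The heart of the argument is the base-$q$ digit computation. Writing $aj=pA_1+A_0$ with $0\le A_0<p$, the relation $ja\equiv-1\pmod p$ forces $A_0=p-1$, and a short manipulation gives $an_0=A_1q+a_0$ with units digit $a_0=\big((p-1)q+a\big)/p=q-(q-a)/p$, a genuine digit in $[0,q)$ since $q>a$. Similarly $bn_0=B_1q+b_0$ with $b_0=(B_0q+b)/p$ and $B_0=bj\bmod p\le p-1$. Since $a>b$ gives $A_1=\lfloor aj/p\rfloor\ge\lfloor bj/p\rfloor=B_1$, and since $A_0=p-1\ge B_0$ forces $a_0-b_0=\big((p-1-B_0)q+(a-b)\big)/p>0$, each base-$q$ digit of $bn_0$ is at most the corresponding digit of $an_0$. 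Lucas' theorem then yields $\binom{an_0}{bn_0}\equiv\binom{A_1}{B_1}\binom{a_0}{b_0}\not\equiv0\pmod q$. Thus $q\mid pn_0-1$ while $q\nmid\binom{an_0}{bn_0}$, so $pn_0-1\nmid\binom{an_0}{bn_0}$, the desired contradiction.

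I expect the main obstacle to be pinning down the correct residue class for $q$. The naive choice that makes $pn-1$ itself prime corresponds to $q\equiv-1\pmod p$ (so $j=1$), and there the no-carry condition of Lucas' theorem reduces to $a\bmod p\ge b\bmod p$, which may fail. The point of taking $q\equiv a\pmod p$, equivalently $j\equiv-a^{-1}\pmod p$, is precisely to drive the leading unit digit $A_0$ up to its maximum $p-1$, after which $a_0\ge b_0$ holds regardless of the size of $b\bmod p$; verifying this digit identity together with the integrality of $n_0$ and of the units digits is the one place demanding genuine care.
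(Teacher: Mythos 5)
Your proof is correct. It shares the paper's high-level strategy---use Dirichlet's theorem to produce a prime in a chosen residue class modulo $p$, arrange for that prime to divide $pn-1$ for a suitable index $n$, and then detect non-divisibility of $\binom{an}{bn}$ by that prime via Lucas' theorem---but the implementation is genuinely different, and better in one concrete respect. The paper takes its auxiliary prime congruent to $-b \pmod p$: it sets $s\equiv b^{-1}$, $t\equiv -s^{-1}\pmod p$, makes $pn+t$ prime, and uses the index $K$ with $pK-1=(pn+t)(p+s)$. That choice requires $b$ to be invertible modulo $p$, so the paper must split into two cases ($p\nmid b$ and $p\mid b$, the latter treated quite tersely), and its digit inequalities only hold for sufficiently large $n$. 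Your choice $q\equiv a\pmod p$ (equivalently $j\equiv -a^{-1}\pmod p$, with $pn_0-1=jq$) instead pushes the units digit $A_0$ of $aj$ in base $p$ up to its maximal value $p-1$, after which the no-carry conditions $A_1\geq B_1$ and $a_0\geq b_0$ hold automatically for \emph{every} $b<a$, with no hypothesis on $b\bmod p$: you get a single uniform argument producing one explicit counterexample $n_0=(jq+1)/p<q$, with no case distinction and no asymptotics. The arithmetic you flag as delicate does check out: $a_0=\bigl((p-1)q+a\bigr)/p$ is an integer precisely because $q\equiv a\pmod p$, and $b_0=(B_0q+b)/p$ is an integer because $B_0q+b\equiv (bj)a+b=b(ja+1)\equiv 0\pmod p$; all four quantities $A_1,a_0,B_1,b_0$ lie in $[0,q)$ since $n_0<q$ and $b<a<q$. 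So your route subsumes both of the paper's cases at once and is, if anything, the cleaner of the two.
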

\begin{proof}
There are two cases.

\noindent\textbf{Case I}. $a\not\equiv 0~  (\mbox{\rm mod~} p)$ and $b\not\equiv 0~  (\mbox{\rm mod~} p)$.

There is an $1\leqslant s\leqslant p-1$ such that $sb\equiv 1~ (\mbox{\rm mod~} p)$. We may write
\begin{eqnarray*}
sa&=& pQ+r,\quad 1\leqslant r\leqslant p-1\\
sb&=&pQ'+1.
\end{eqnarray*} 
Choose $t$ such that $st\equiv -1~(\mbox{\rm mod~} p)$. Thus there is a $k$ with $st=kp-1$. We claim that
\[(pn+t)(p+s)=pK-1\mid{aK\choose bK},\]
where $K=pn+ns+t+k$. By Dirichlet's theorem, there are infinitely many primes of the form $pn+t$. If $pn+t$ is prime, Lucas' theorem implies that
\begin{eqnarray*}
{aK\choose bK}&=&{a(pn+ns+t+k)\choose b(pn+ns+t+k)}\\
&=&{a(pn+t)+a(ns+k)\choose b(pn+t)+b(ns+k)}\\
&=&{a(pn+t)+Q(pn+t)+rn+ak-Qt\choose b(pn+t)+Q'(pn+t)+n+bk-Q't}\\
&\equiv&{a+Q\choose b+Q'}{rn+ak-Qt\choose n+bk-Q't}\quad(\mbox{\rm mod~}pn+t),
\end{eqnarray*}
since for sufficiently large $n$ we have $rn+ak-Qt,n+bk-Q't<pn+t$.

Now we have 
\begin{eqnarray*}
s(n+bk-Q't)&=&sn+(pQ'+1)k-Q'(pk-1)\\
&=&sn+k+Q'\\
&\leqslant& srn+rk+Q\\
&\leqslant& srn+(pQ+r)k-Q(pk-1)=s(rn+ak-Qt).
\end{eqnarray*}
Whence ${rn+ak-Qt\choose n+bk-Q't}\neq 0$.

\noindent\textbf{Case II}. $a\not\equiv 0~  (\mbox{\rm mod~} p)$ and $b\equiv 0~  (\mbox{\rm mod~} p)$.

We should have 
\[0\equiv {aK\choose bK}\equiv{a+Q\choose b+Q'}{rn+ak-Qt\choose bk-Q't}\quad(\mbox{\rm mod~}pn+t),\]
where $K=pn+t+k$. That is impossible for sufficiently large $n$.
\end{proof}
In the next theorem we aim at considering the case $a=cp$ and $b=pk+r$, where $1\leqslant r\leqslant p-1$ and we give a partial answer for Conjecture  \ref{Con2} in this case.

We know that for each prime number $p$ and $\varepsilon>0$ there is a real number $M_p(\varepsilon)$ such that for each $x\geqslant M_p(\varepsilon)$ there is a prime number $q$ in the interval $(x,(1+\varepsilon) x)$ with $q\equiv -1~(\mbox{\rm mod~} p)$ \cite{Har}. 
Moreover, there is a real number $M'_p(\varepsilon)$ such that for each $x\geqslant M'_p(\varepsilon)$ there are at least two prime numbers $q,q'$ in the interval $(x,(1+\varepsilon) x)$ with $q,q'\equiv -1~(\mbox{\rm mod~} p)$. 

In the following we may assume $b< c(p-r)$, since if $b\geqslant c(p-r)$ then ${pcn \choose bn}={pcn\choose b'n}$, where $b'=pc-b$. We have $b'=pk'+r'$, where $k'=c-k-1, r'=p-r$ and $b'<c(p-r')$. 
\begin{theorem}\label{main}
Let $p$ be an odd prime, $1\leqslant r\leqslant p-1$ and $\gamma=\frac{p^2r+p^2+r^2-pr}{p^2(p+1)}$. 
\begin{itemize}
\item[i.] If $pk+r\leqslant cp(1-\gamma)$ then there are no positive integers $c\geqslant M_p(\frac{(p-r)^2}{pr(p+1)})$ and $k$ such that
\[{pcn\choose (pk+r)n}\equiv 0\quad(\mbox{\rm mod~} pn-1).\]
for all $n\geqslant 1$.
\item[ii.] If $cp(1-\gamma)< pk+r<c(p-r)$ and $r\leqslant \frac{p-3}2$ then there are no positive integers $k\geqslant 2 M'_p(\frac{p-(2r+1)}{p+1})$ and $c$ such that
\[{pcn\choose (pk+r)n}\equiv 0\quad(\mbox{\rm mod~} pn-1).\]
for all $n\geqslant 1$. 
\end{itemize}
\end{theorem}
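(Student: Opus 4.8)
The plan is to refute each statement by its contrapositive: assuming the divisibility $pn-1\mid\binom{pcn}{(pk+r)n}$ held for \emph{all} $n$, I will exhibit a single $n$ for which it fails. The unifying device is to restrict attention to $n=(q+1)/p$ for a prime $q\equiv-1\pmod p$ (such $n$ is an integer precisely because $p\mid q+1$), so that $pn-1=q$ is itself prime. For this $n$ one has the transparent expressions $pcn=c(q+1)$ and $(pk+r)n=(pk+r)(q+1)/p=kq+(k+rn)$, where $rn=r(q+1)/p\in\mathbb Z$. Since $q$ is prime, Lucas' theorem reduces everything to base-$q$ digit comparisons: it suffices to produce one such $q$ with $q\nmid\binom{pcn}{(pk+r)n}$, equivalently (by Kummer) no carries when adding $(pk+r)n$ and $(p(c-k)-r)n$ in base $q$. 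Throughout I use the stated reduction $pk+r=b<c(p-r)$, which guarantees $b'/r>c$ where $b'=pc-b=p(c-k)-r$.

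For part (i) I choose $q$ just above $c$, so that $q>c>(pk+r)/(p-r)$ forces both $k+rn<q$ and $k<q$; the base-$q$ strings are then the two-digit words $pcn=(c,c)$ and $(pk+r)n=(k,\,k+rn)$, and Lucas gives $\binom{pcn}{(pk+r)n}\equiv\binom{c}{k}\binom{c}{k+rn}\pmod q$. This is nonzero exactly when $k+rn\le c$, i.e. when $q\le (p(c-k)-r)/r=b'/r$. Thus I need a prime $q\equiv-1\pmod p$ in the window $(c,\,b'/r)$. Feeding $x=c\ge M_p(\varepsilon)$ into the analytic input yields a prime in $(c,(1+\varepsilon)c)$, and a direct computation shows that the hypothesis $pk+r\le cp(1-\gamma)$ is \emph{exactly} the inequality $(1+\varepsilon)c\le b'/r$ placing that prime inside the window; here one checks the defining identity $p(1-\gamma)=p-r-r\varepsilon$ with $\varepsilon=(p-r)^2/(pr(p+1))$. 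Hence such $q$ exists, $q\nmid\binom{pcn}{(pk+r)n}$, and (i) follows.

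For part (ii) the two-digit strategy breaks down, and diagnosing \emph{why} points to the right fix. When $pk+r>cp(1-\gamma)$ the window $(c,\,b'/r)$ is shorter than what $M_p$ guarantees; worse, once $q>b'/r$ both $(pk+r)n$ and $(p(c-k)-r)n$ are clean two-digit words whose low digits sum to $(k+rn)+(k'+m')=(c-1)+(q+1)=c+q$, producing exactly one carry, so that $v_q\binom{pcn}{(pk+r)n}=1=v_q(pn-1)$ and no contradiction survives. I therefore plan to drop to a smaller scale, taking $q$ just above $(pk+r)/(p-r)\sim pk/(p-r)$, so that $q<c$ and $pcn=c(q+1)$ becomes a \emph{three}-digit base-$q$ word governed by $c_1=\lfloor c/q\rfloor$ and $c_0=c\bmod q$, while $(pk+r)n=(0,\,k,\,k+rn)$ stays clean. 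Lucas then reduces non-divisibility to the digit inequalities $k+rn\le c_0$ and $k\le c_0+c_1$, the first being decisive, and the restriction $r\le\frac{p-3}{2}$ keeps $rn<q/2$ so that $k+rn<q$ is attainable.

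The hard part will be controlling the low digit $c_0=c\bmod q$: it must exceed $k+rn$, yet it decreases as $q$ grows within a block of constant $\lfloor c/q\rfloor$ and jumps downward each time that floor increments, and a carry into the middle digit of $pcn$ (the case $c_0+c_1\ge q$) can spoil $k\le c_0+c_1$ as well. This is precisely where the strengthened input is needed: among the \emph{two} primes $q,q'\equiv-1\pmod p$ lying in $(x,(1+\varepsilon')x)$—with $x$ on the scale of $k$, which is why the hypothesis is $k\ge 2M'_p(\varepsilon')$—at least one avoids the thin exceptional set of bases for which either the middle digit of $pcn$ carries or $c\bmod q$ falls below $k+rn$. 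Verifying that the calibrated window together with the value $\varepsilon'=(p-(2r+1))/(p+1)$ genuinely forces these digit inequalities is the delicate computational core of the argument, and I expect it, rather than any conceptual step, to be the main obstacle.
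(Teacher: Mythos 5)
Your part (i) is correct and is essentially the paper's own proof: the same prime window $(c,\,\frac{p(c-k)}{r}-1)$, the same $\varepsilon=\frac{(p-r)^2}{pr(p+1)}$ extracted from the identity $p(1-\gamma)=p-r-r\varepsilon$, and the same two-digit Lucas reduction to $\binom{c}{k}\binom{c}{rn+k}$.

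Part (ii) is where the proposal fails, and the problem is not merely that you deferred the ``delicate computational core'': that core is provably impossible. Write $b=pk+r$. Your plan keeps $(pk+r)n$ as the clean word $(0,k,k+rn)$, which forces both $k<q$ and $k+rn<q$; the latter, via $rn=\frac{r(q+1)}{p}$, is exactly $q>\frac{b}{p-r}$. Since $p(1-\gamma)>\frac{p-r}{2}$ (equivalent to $p^2+2r>p$), the case (ii) hypothesis $b>cp(1-\gamma)$ gives $\frac{b}{p-r}>\frac{c}{2}$, so any admissible $q<c$ satisfies $\lfloor c/q\rfloor=1$ and $c\bmod q=c-q$. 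Your decisive inequality $k+rn\leqslant c\bmod q$ then unwinds to $q(p+r)\leqslant pc-b$, i.e. $q\leqslant\frac{pc-b}{p+r}$. So every prime your plan could use must lie in $\bigl(\frac{b}{p-r},\,\frac{pc-b}{p+r}\bigr]$. But this interval is nonempty if and only if $2b<c(p-r)$, whereas case (ii) forces $2b>2cp(1-\gamma)>c(p-r)$ (the same inequality $p^2+2r>p$ again). Hence the interval contains no integer at all, for every admissible pair $(c,k)$: no amount of cleverness in choosing $q$, and no strengthening of the prime-counting input, can satisfy your digit inequalities. The $M'_p$ hypothesis is also misread: it is not there to dodge a ``thin exceptional set'' of bad bases.

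The idea you are missing is to let the lower number wrap around $q$ as well, i.e. to take $q=pn-1$ \emph{below} $k$. The paper picks two primes $pm-1<pn-1$ in $(\alpha k,k)$ with $\alpha=\frac{p+1}{2(p-r)}$; this interval has width ratio $\frac{1}{\alpha}=1+\frac{p-(2r+1)}{p+1}$, exactly what $M'_p$ handles, and $k\geqslant 2M'_p$ guarantees $\alpha k\geqslant M'_p$. With $q=pn-1$ one gets $q<k<c<2\alpha k<2(pm-1)<2q$, so $c=q+R$ and $rn+k=q+R'$ with $0\leqslant R'<R<q$ (the bounds $rn+k<c$ and $c<\frac{p+1}{p-r}k$ are checked as in the paper, the latter using $k\geqslant p$). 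Then $pcn=(c+1)q+R$ and $(pk+r)n=(k+1)q+R'$, and Lucas gives $\binom{pcn}{(pk+r)n}\equiv\binom{c+1}{k+1}\binom{R}{R'}\pmod q$, which is nonzero because $\lfloor\frac{c+1}{q}\rfloor-\lfloor\frac{k+1}{q}\rfloor-\lfloor\frac{c-k}{q}\rfloor=1-1-0=0$. In other words, both numbers have leading digit $1$ in base $q$ and the would-be carries cancel. The smaller prime $pm-1$ serves only to guarantee the strict inequality $c+1\leqslant 2(pm-1)<2q$, keeping those leading digits equal to $1$; that is the real role of the two-primes hypothesis.
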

\begin{proof}
\noindent{i}. Put $b=pk+r$. We have $-\frac{b}{c}\geqslant (\gamma-1)p$. Thus
\[\frac{\frac{p(c-k)}r-1}c=\frac{p(c-k)-r}{rc}=\frac{p}r-\frac{b}{rc}\geqslant\frac{p}r+\frac{\gamma p}{r}-\frac{p}{r}=1+\frac{(p-r)^2}{pr(p+1)}>1.\]
Now since $c\geqslant M_p(\frac{(p-r)^2}{pr(p+1)})$, there is a prime number $pn-1$ with
\[ c<pn-1<\frac{p(c-k)}r-1.\]
This implies the result, since $k\leqslant rn+k\leqslant c<pn-1$ and Lucas' theorem implies
\[ {pcn\choose (pk+r)n}= {c(pn-1)+c\choose k(pn-1)+rn+k}\equiv{c\choose k}{c\choose rn+k}\quad(\mbox{\rm mod~} pn-1).\]
\noindent{ii}. For $\alpha=\frac{p+1}{2(p-r)}$ we have
\[\frac{k}{\alpha k}=\frac{2(p-r)}{p+1}=1+\frac{p-(2r+1)}{p+1}>1\]
and since $\alpha k\geqslant M'_p(\frac{p-(2r+1)}{p+1})$,
there are two prime numbers $pm-1,pn-1$ with $\alpha k<pm-1<pn-1<k$. We have 
\[k=\frac{b-r}{p}<\frac{c(p-r)-r}{p}=c-\frac{r(c+1)}{p}<c.\]
Furthermore,
\[rn+k<r\cdot \frac{k+1}{p}+k=\frac{rk+b}{p}<\frac{rk+c(p-r)}{p}<\frac{rc+c(p-r)}{p}=c.\]
Moreover,
\[\frac{c}{k}<\frac{b}{k(1-\gamma)p}=\frac{kp+r}{kp\cdot\frac{(p^2+r)(p-r)}{p^2(p+1)}}\leqslant\frac{p+1}{p-r},\]
where the last inequality is true since $k\geqslant p$. We can therefore deduce that
\[pn-1< k<c<2\cdot\frac{\frac{p+1}{p-r}}{2}k=2\alpha k<2(pm-1).\]
We have $c+1\leqslant 2(pm-1)<2(pn-1)$. 
Write $c=(pn-1)+R$ and $rn+k=(pn-1)+R'$. We know that $pn-1>R>R'$. Now Lucas' theorem implies
\[{pcn\choose (pk+r)n}= {c(pn-1)+(pn-1)+R\choose k(pn-1)+(pn-1)+R'}\equiv{c+1\choose k+1}{R\choose R'}\quad(\mbox{\rm mod~} pn-1).\]
The latter is not congruent to 0, since
\[\lfloor \frac{c+1}{pn-1}\rfloor-\lfloor \frac{k+1}{pn-1}\rfloor-\lfloor \frac{c-k}{pn-1}\rfloor\leqslant 1-1-0=0.\]
\end{proof}
\begin{lemma}\label{beta} Let $p$ be an odd prime, $1\leqslant r\leqslant p-2, j=\lfloor\frac{p}{p-r}\rfloor$ and $\alpha=\frac{p}{(p-r)(j+1)}$. Then there is an $0<\varepsilon(p,r)<1$ such that
\[\alpha<\frac{p+\varepsilon(p,r)}{(p-r)(j+1)}<\frac{\frac{r}{p-r}}{j-1+\frac{r}{p}}.\]
\end{lemma}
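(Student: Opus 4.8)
The plan is to treat the two displayed inequalities separately and reduce the entire statement to producing a single positive ``slack.'' The left inequality $\alpha<\frac{p+\varepsilon(p,r)}{(p-r)(j+1)}$ shares the denominator $(p-r)(j+1)$ with $\alpha=\frac{p}{(p-r)(j+1)}$, so after clearing that (positive) denominator it collapses to $p<p+\varepsilon(p,r)$, i.e.\ to $\varepsilon(p,r)>0$. Hence the left inequality holds for \emph{every} admissible choice of $\varepsilon$, and all the real content sits in the right inequality.

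For the right inequality I would first put its right-hand side over a common fraction. Since $j-1+\frac{r}{p}=\frac{p(j-1)+r}{p}$, we have
\[\frac{\frac{r}{p-r}}{j-1+\frac{r}{p}}=\frac{pr}{(p-r)\big(p(j-1)+r\big)}.\]
Multiplying the target inequality through by the positive factor $p-r$ and then cross-multiplying by the positive denominators $j+1$ and $p(j-1)+r$ (these are positive because $j\geq 1$ and $r\geq 1$), the claim becomes the linear inequality in $\varepsilon$
\[(p+\varepsilon)\big(p(j-1)+r\big)<pr(j+1).\]
Expanding both sides and cancelling the common term $pr$, this is equivalent to
\[\varepsilon\big(p(j-1)+r\big)<p\big(p-j(p-r)\big).\]

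The key quantity is $\rho:=p-j(p-r)$. By definition of the floor, $j=\lfloor\frac{p}{p-r}\rfloor$ gives $p=j(p-r)+\rho$ with $0\leq\rho<p-r$, so $\rho$ is exactly the remainder of $p$ on division by $p-r$. The crucial — and essentially only — nontrivial point is that $\rho>0$, and this is precisely where both hypotheses enter. Since $1\leq r\leq p-2$ we get $2\leq p-r\leq p-1$, so $p-r$ is neither $1$ nor $p$; as $p$ is prime its only divisors are $1$ and $p$, hence $p-r\nmid p$ and therefore $\rho\geq 1$. Consequently $p\rho$ is a positive integer while the coefficient $p(j-1)+r$ is positive, so the set of valid $\varepsilon$ is the nonempty open interval $\big(0,\ \frac{p\rho}{p(j-1)+r}\big)$. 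Choosing any
\[\varepsilon(p,r)\in\Big(0,\ \min\big\{1,\ \tfrac{p\rho}{p(j-1)+r}\big\}\Big)\]
makes both inequalities hold while keeping $0<\varepsilon(p,r)<1$, which completes the argument.

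I do not anticipate a genuine obstacle: once one notices that the left inequality is vacuous and that the right one reduces to the positivity of the remainder $p\bmod(p-r)$, the lemma is an elementary existence statement. The one step that demands care is verifying $\rho\neq 0$, which is exactly what forces $p$ to be prime and $r$ to be bounded by $p-2$ rather than $p-1$; if $r=p-1$ were allowed then $p-r=1$ would divide $p$, giving $\rho=0$ and no admissible $\varepsilon$, so the hypothesis $r\leq p-2$ is sharp for this construction.
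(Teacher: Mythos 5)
Your proposal is correct and takes essentially the same route as the paper: the paper's one-line proof likewise reduces the lemma to the strict inequality $\alpha<\frac{r/(p-r)}{\,j-1+r/p\,}$, observing it holds if and only if $p-r\nmid p$ (equivalently $r\neq p-1$), which is exactly the positivity of your remainder $\rho=p-j(p-r)$. You have simply carried out explicitly the ``simple verification'' the paper leaves to the reader, including the explicit admissible interval $\bigl(0,\min\{1,\tfrac{p\rho}{p(j-1)+r}\}\bigr)$ for $\varepsilon(p,r)$.
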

\begin{proof}
A simple verification shows that 
\[\alpha<\frac{\frac{r}{p-r}}{j-1+\frac{r}{p}}\]
if and only if  $p-r\nmid p$ or equivalently $r\neq p-1$. This implies the existence of $\varepsilon(p,r)$.
\end{proof}
On the other hand, let $c=j(pn-1)+R, 0\leqslant R\leqslant pn-2$ and $rn+k=(pn-1)+R', 0\leqslant R'\leqslant pn-2$ and suppose $pn-1=\theta k$, where $\alpha<\theta<\beta$. Then by Lemma~\ref{beta}, 
\begin{eqnarray*}
R'+j\theta k&=& k-(pn-1)+rn+j\theta k\\
&=& k-\theta k+r\cdot\frac{\theta k+1}{p}+j\theta k\\
&=&  k(1+(-1+\frac{r}{p}+j)\theta)+\frac{r}{p}\\
&\leqslant&  k(1+(-1+\frac{r}{p}+j)\beta)+\frac{r}{p}\\
&=&  k(1+(j-1+\frac{r}{p})\beta)+\frac{r}{p}\\
&<&  k(1+\frac{r}{p-r})+\frac{r}{p}\\
&<&\frac{p}{p-r}k+\frac{r}{p-r}\\
&<&c.
\end{eqnarray*}
Hence
\[R=c-j(pn-1)=c-j\theta k> R'.\]
This shows that
\[\lfloor \frac{c}{pn-1}\rfloor-\lfloor \frac{rn+k}{pn-1}\rfloor-\lfloor \frac{c-(rn+k)}{pn-1}\rfloor= j-1-(j-1+\lfloor\frac{R-R'}{pn-1}\rfloor)=0.\]
%%%%%%%%%%%%%%%%%%%%%%%%%%%%
\section{Conjecture \ref{Con3} }

In this section, using only properties of the $p$-adic valuation we give an inductive proof of Conjecture 7.3 of \cite{KRA}. For $n \in \mathbb{N}$ and a prime $p$, the $p$-adic valuation of $n$, denoted by $\nu_p(n)$ is the highest power of $p$ that divides $n$. The expansion of $n \in \mathbb{N}$ in base p is written as $n=n_0+n_1p+\ldots+n_kp^k$ with integers $0\leqslant n_i\leqslant p-1$ and $n_k\neq 0$. \textit{Legendre's classical formula} for the factorials $\nu_p(n!)=\sum_{i=1}^{\infty}\lfloor \frac{n}{p^i}\rfloor$ appears in elementary textbooks. 
\begin{theorem}
For any positive integer $m$, there are positive integers $a$ and $b$ such that $am>b$ and
\[{amn \choose bn}\equiv 0\quad (\mbox{\rm mod~} an-1)\]
for all $n\geqslant 1$.
\end{theorem}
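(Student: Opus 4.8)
The plan is to exhibit an explicit pair $(a,b)$ and verify the divisibility prime power by prime power. I would set $a=\prod_{p\le 2m}p$, the product of all primes not exceeding $2m$, and $b=a/2$; note that $a$ is even, so $b$ is a positive integer, and $am\ge a>a/2=b$ while $0<b<a$. Since $an-1\mid\binom{amn}{bn}$ is equivalent to $\nu_p(an-1)\le\nu_p\binom{amn}{bn}$ for every prime $p$, the first thing to record is that the modulus has no small prime factors: if $p\le 2m$ then $p\mid a$, so $an\equiv 0$ and $an-1\equiv-1\pmod p$, whence $p\nmid an-1$. Thus every prime $p$ with $\nu_p(an-1)=e\ge 1$ is odd and satisfies $p\ge 2m+1$, i.e. $m\le\frac{p-1}{2}$.

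The heart of the argument is a local computation at such a prime. Writing $an-1=p^e w$ with $p\nmid w$, we have $an\equiv 1\pmod{p^e}$; hence $amn\equiv m\pmod{p^e}$, and from $2bn=an\equiv 1\pmod{p^e}$ we get $bn\equiv\frac{p^e+1}{2}\pmod{p^e}$. I would then read off the low-order base-$p$ digits: because $m<p$, the digits of $amn$ in positions $0,1,\dots,e-1$ are $(m,0,\dots,0)$, while the digits of $\frac{p^e+1}{2}$ in those positions are $(\frac{p+1}{2},\frac{p-1}{2},\dots,\frac{p-1}{2})$. Now apply Kummer's theorem: $\nu_p\binom{amn}{bn}$ is the number of carries when $bn$ is added to $(am-b)n$ in base $p$, equivalently the number of borrows in the subtraction $amn-bn$. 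Since $m\le\frac{p-1}{2}<\frac{p+1}{2}$, position $0$ already borrows; and each of positions $1,\dots,e-1$ has minuend digit $0$ against a nonzero subtrahend digit plus an incoming borrow, so it borrows too. This yields at least $e$ borrows, hence $\nu_p\binom{amn}{bn}\ge e=\nu_p(an-1)$. Equivalently one can run this through Legendre's formula $\nu_p\binom{amn}{bn}=\frac{s_p(bn)+s_p((am-b)n)-s_p(amn)}{p-1}$, where $s_p$ denotes the base-$p$ digit sum. Combining over all primes gives $an-1\mid\binom{amn}{bn}$ for every $n\ge 1$.

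The step I expect to be the main obstacle is pinpointing the threshold $2m$: the borrow at position $0$ requires $m\le\frac{p-1}{2}$, which is precisely why the construction forces every prime factor of $an-1$ to exceed $2m$ by taking $a$ divisible by all primes up to $2m$. For $m=1$ this recovers $a=2,\ b=1$ and the classical fact $2n-1\mid\binom{2n}{n}$ (visible from $n\binom{2n-1}{n}=(2n-1)\binom{2n-2}{n-1}$), and for $m=2$ it gives $a=6,\ b=3$. One must still check the boundary digit estimates, namely that $\frac{p+1}{2}<p$, that $m$ occupies a single base-$p$ digit, and that the borrow does not terminate before position $e-1$; all of these follow at once from $p\ge 2m+1$, and the whole computation is uniform in the exponent $e$, so no separate treatment of the case where $an-1$ is prime is needed.
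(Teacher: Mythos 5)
Your proof is correct and follows essentially the same strategy as the paper's: choose $a$ to be the product of all small primes so that every prime factor of $an-1$ is large, take $b$ to be a fixed fraction of $a$, and bound $\nu_p\binom{amn}{bn}$ from below by counting base-$p$ carries, using $an\equiv 1\ (\mbox{\rm mod~} p^e)$ to control the low-order digits. The differences are only cosmetic: the paper takes $b=\frac{2}{3}a$ with threshold $3m$ and carries out the carry count by hand via Legendre's formula, while you take $b=\frac{1}{2}a$ with threshold $2m$ and invoke Kummer's theorem.
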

\begin{proof}
Let $p_1=2<p_2=3<p_3=5<\ldots$ be the sequence of prime numbers. Choose $t$ such that $p_t>3m$ and put
\begin{eqnarray*}
a&=&6p_3\ldots p_t,\\
b&=&4p_3\ldots p_t.
\end{eqnarray*}
Let $n$ be a positive integer and $q^\alpha\mid an-1$ for some prime number $q$. We aim at showing that $q^\alpha\mid{amn\choose bn}$. This of course proves that $an-1\mid{amn\choose bn}$.

Write $bn$ in base $q$ in the form $\sum_{j=0}^N r_jq^j$, where $N=\alpha-1$ or $\alpha$ since $bn\geqslant q^{\alpha-1}$. At first we show that $m<r_0$. We have
\[r_0\equiv bn=4p_3\ldots p_t n\equiv 2\cdot 3^*\cdot 6p_3\ldots p_t n=2\cdot 3^* an\equiv 2\cdot 3^* \quad (\mbox{\rm mod~} q),\]
where $3^*$ is the inverse of $3$ mod $q$. We know that 
\[3^*=\left\{\begin{array}{ll}
\frac{q+1}{3} &  \mbox{if~} 3\mid q+1\\ 
\frac{2q+1}{3} & \mbox{if~} 3\mid q+2
\end{array}\right. \]
Note that $3^*$ exists since $q\neq 3$. We thus have
\[r_0=2\cdot 3^*=\left\{\begin{array}{ll}
2\cdot\frac{q+1}{3}>\frac{q}{3} &  \mbox{if~} 3\mid q+1\\ 
2\cdot\frac{2q+1}{3}-q=\frac{q+2}{3}>\frac{q}{3} & \mbox{if~} 3\mid q+2
\end{array}\right. \]
We have $\gcd(q,p_1p_2\ldots p_t)=1$. Hence $q>p_t>3m$. This shows that $m<r_0$. We therefore have
\[\lfloor\frac{bn-m}{q^i}\rfloor=\lfloor\frac{\sum_{j=0}^N r_jq^j-m}{q^i}\rfloor=\lfloor\sum_{j=i}^N r_jq^{j-i}+\frac{\sum_{j=1}^{i-1}r_jq^j+r_0-m}{q^i}\rfloor=\sum_{j=i}^N r_jq^{j-i}=\lfloor\frac{bn}{q^i}\rfloor.\]
Now let $an-1=kq^\alpha$, where $\gcd(k,q)=1$. We evaluate the $q$-adic valuation $v_q({amn\choose bn})$. If $N=\alpha$ then
\begin{eqnarray*}
v_q\big({amn\choose bn}\big)&\geqslant&\sum_{i=1}^\alpha\big(\lfloor\frac{amn}{q^i}\rfloor-\lfloor\frac{bn}{q^i	}\rfloor-\lfloor\frac{(am-b)n}{q^i}\rfloor\big)\\
&\geqslant&\sum_{i=1}^\alpha \big(mkq^{\alpha-i}-\lfloor\frac{bn}{q^i	}\rfloor-\lfloor\frac{(am-b)n}{q^i}\rfloor\big)\\
&=&\sum_{i=1}^\alpha \big(mkq^{\alpha-i}-\lfloor\frac{bn}{q^i	}\rfloor-\lfloor\frac{mkq^\alpha+m-bn}{q^i}\rfloor\big)\\
&=&\sum_{i=1}^\alpha \big(mkq^{\alpha-i}-\lfloor\frac{bn}{q^i	}\rfloor-mkq^{\alpha-i}-\lfloor\frac{m-bn}{q^i}\rfloor\big)\\
&=&\sum_{i=1}^\alpha \big(-\lfloor\frac{bn}{q^i}\rfloor-\lfloor-\frac{bn-m}{q^i}\rfloor\big)\\
&=&\sum_{i=1}^\alpha \big(-\lfloor\frac{bn}{q^i}\rfloor+\lfloor\frac{bn-m}{q^i}\rfloor+1\big)\\
&=&\sum_{i=1}^\alpha 1=\alpha,\\
\end{eqnarray*}
since $\frac{bn-m}{q^i}$ is not an integer. 

On the other hand, if $N=\alpha-1$ then 
\begin{eqnarray*}
v_q\big({amn\choose bn}\big)&=&mk+\sum_{i=1}^{\alpha-1}\big(\lfloor\frac{amn}{q^i}\rfloor-\lfloor\frac{bn}{q^i	}\rfloor-\lfloor\frac{(am-b)n}{q^i}\rfloor\big)\\
&\geqslant&mk+\alpha-1\geqslant\alpha.
\end{eqnarray*}
Thus $q^\alpha\mid{amn\choose bn}$.

\end{proof}

\begin{theorem}
For any positive integer $m$, there are positive integers $a$ and $b$ such that 
\[{amn \choose bn}\equiv 0\quad (\mbox{\rm mod~} an-a)\]
for all $n\geqslant 1$.
\end{theorem}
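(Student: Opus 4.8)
The plan is to run the same $p$-adic valuation argument used for the modulus $an-1$ in Section~3, now applied to the modulus $an-a=a(n-1)$. Fixing $m$, I would again take $a$ and $b$ to be products of small primes of primorial type, say $a=6p_3\cdots p_t$ and $b=4p_3\cdots p_t$ with $p_t>3m$, so that every prime $q$ with $q\nmid a$ automatically satisfies $q>3m$. It suffices to show, for every prime $q$ and every $n$, that $\nu_q\binom{amn}{bn}\geqslant \nu_q(a)+\nu_q(n-1)=\nu_q(a(n-1))$; summing over $q$ then yields $a(n-1)\mid\binom{amn}{bn}$. I would evaluate $\nu_q\binom{amn}{bn}$ through Legendre's formula exactly as before, so that each contribution $\lfloor amn/q^i\rfloor-\lfloor bn/q^i\rfloor-\lfloor (am-b)n/q^i\rfloor$ is the carry indicator at level $i$ and the whole sum counts the carries in the base-$q$ addition $bn+(am-b)n$.

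The argument then splits according to whether $q\mid a$ or $q\nmid a$. For the finitely many primes $q\mid a$ (all at most $p_t$), the exponent $\nu_q(a)$ is a bounded constant, and I would argue that the rapid growth of $bn$ and $(am-b)n$ forces at least $\nu_q(a)+\nu_q(n-1)$ carries, recovering the $\nu_q(a)$ part from the fixed low-order digits and the $\nu_q(n-1)$ part from the block of digits created by the factor $n-1$. For the primes $q\nmid a$ we have $q^{\nu_q(n-1)}\mid n-1$, hence $n\equiv 1\pmod{q^{\nu_q(n-1)}}$, which gives $bn\equiv b$ and $amn\equiv am\pmod{q^{\nu_q(n-1)}}$; I would then try to mimic the Section~3 computation, where the congruence $an\equiv1$ produced the small residue $amn\equiv m$ and the inequality $m<r_0$ on the last base-$q$ digit of $bn$ guaranteed that subtraction avoided a borrow and forced a carry at each of the $\alpha=\nu_q(an-1)$ levels.

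The hard part is exactly this last case, $q\nmid a$ with $q\mid n-1$, and here the analogy with the $an-1$ theorem is delicate. In that theorem the very shape of the modulus guaranteed that each prime factor was coprime to $a$ and therefore large ($>3m$), while the congruence $an\equiv 1$ collapsed $amn$ to the small residue $m$; for the modulus $an-a$ the factor $n-1$ injects arbitrary, in particular arbitrarily large, primes $q$, and the residues $bn\equiv b$, $amn\equiv am$ are no longer small relative to $q$. When $q>am$ the low-order base-$q$ digits of $bn$ and $(am-b)n$ are simply $b$ and $am-b$, whose sum $am$ produces no carry, so the naive count degenerates to $\nu_q\binom{amn}{bn}=\nu_q\binom{am\ell}{b\ell}$ with $\ell=(n-1)/q^{\nu_q(n-1)}$, and bounding this from below by $\nu_q(n-1)$ uniformly in $\ell$ is the crux. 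I expect the decisive device to be a sharper choice of $b$ relative to $a$, or an extra congruence condition on the base-$q$ digits of $b$, engineered precisely to supply the carries at the levels contributed by $n-1$; this estimate at large primes dividing $n-1$ is where the main difficulty — and the genuine novelty beyond the $an-1$ case — is concentrated.
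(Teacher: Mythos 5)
Your setup (Kummer/Legendre carries, splitting $\nu_q(an-a)=\nu_q(a)+\nu_q(n-1)$, and treating $q\mid a$ and $q\nmid a$ separately) is sound, and the difficulty you isolate in your final paragraph is exactly the right one --- but it is not a hard step awaiting a cleverer choice of $b$; it is a genuine obstruction that kills the statement in the non-degenerate reading $am>b$ that you are working with. Indeed, fix any $a,b,m$ with $b\leqslant am$, choose a prime $q>\max\bigl(am,\binom{am}{b}\bigr)$, and take $n=q+1$, so that $q\mid an-a$. The base-$q$ digits of $amn=am\cdot q+am$ are $(am,am)$ and those of $bn=b\cdot q+b$ are $(b,b)$, so Lucas' theorem gives
\[\binom{amn}{bn}\equiv\binom{am}{b}^{2}\not\equiv 0\pmod q,\]
because $0<\binom{am}{b}<q$. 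Hence $\binom{amn}{bn}\not\equiv0\pmod{an-a}$. This is your ``no carry at the low digits'' observation pushed to its conclusion: no sharper choice of $b$, and no congruence condition on its base-$q$ digits, can rescue the argument, since the counterexample applies to every pair with $b\leqslant am$. The statement survives only in the degenerate reading $b>am$, where every $\binom{amn}{bn}$ vanishes; note that the case $n=1$, where the modulus is $an-a=0$, already forces $\binom{am}{b}=0$, i.e.\ $b>am$.

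For comparison: the paper's own proof founders on exactly the rock you identified. It takes $b=ap_3\cdots p_t$ (so $b>am$ unless $m\geqslant p_3\cdots p_t$, a point it never discusses), asserts $\gcd(q,p_1p_2\cdots p_t)=1$ --- unjustified, since $q$ may be any prime factor of $n-1$ --- and then, just as you propose, uses $an\equiv a\pmod{q^{\alpha}}$ to reduce the first $\alpha=\nu_q(an-a)$ Legendre summands of $\nu_q\binom{amn}{bn}$ to the corresponding summands for the pair $(am,b)$. But its closing step only checks that each summand $\lfloor am/q^i\rfloor-\lfloor b/q^i\rfloor-\lfloor (am-b)/q^i\rfloor$ is nonnegative, which is automatic (these are carry indicators, each $0$ or $1$) and yields only $\nu_q\binom{amn}{bn}\geqslant 0$; what the conclusion requires is that all $\alpha$ of them equal $1$, and for $q>am$ they are all $0$. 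So your proposal is incomplete, but your diagnosis is more accurate than the paper's treatment of the same point: the estimate you say is missing does not exist, and the theorem holds only in the trivial, all-binomial-coefficients-zero sense.
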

\begin{proof}
Let $p_1=2<p_2=3<p_3=5<\ldots$ be the sequence of prime numbers. For a positive integer $t$ put
\[b=ap_3\ldots p_t.\]
 Let $n$ be a positive integer and $q^\alpha\mid an-a$ for some prime number $q$. It is sufficient $q^\alpha\mid{amn\choose bn}$, this concludes the proof.
Let $b=r_0 + r_1q+ \ldots +r_{\alpha}q^{\alpha}$ is the $q$-adic expansions of $b$ where $0\leq r_i \leq q-1$. We have  $\gcd(q,p_1p_2\ldots p_t)=1$ and $r_0 \equiv b\quad (\mbox{\rm mod~} q)$. Now, let $an=a+kq^\alpha$ where $\gcd(k,q)=1$. We evaluate the $q$-adic valuation $v_q({amn\choose bn})$. We have 
\begin{eqnarray*}
v_q\big({amn\choose bn}\big)&=&v_q\big(\frac{(amn)!}{(bn)!(amn-bn)!}\big)\\
&=&\sum_{i=1}^\alpha\big(\lfloor\frac{amn}{q^i}\rfloor-\lfloor\frac{bn}{q^i	}\rfloor-\lfloor\frac{anm-bn}{q^i}\rfloor\big)\\
&=&\sum_{i=1}^\alpha\big(\lfloor\frac{mkq^\alpha+ma}{q^i}\rfloor-\lfloor\frac{anp_1p_2\ldots p_t}{q^i	}\rfloor-\lfloor\frac{anm-anp_1p_2\ldots p_t}{q^i}\rfloor\big)\\
&=&\sum_{i=1}^\alpha\big(\lfloor\frac{am}{q^i}\rfloor-\lfloor\frac{b}{q^i	}\rfloor-\lfloor\frac{am-b}{q^i}\rfloor\big)\\
\end{eqnarray*}
So, it is sufficient to show for any $m \in \mathbb{Z^+}$ the inequality 
\begin{eqnarray}\label{eq}
\lfloor\frac{am}{q^i}\rfloor-\lfloor\frac{b}{q^i	}\rfloor-\lfloor\frac{am-b}{q^i}\rfloor\geqslant 0
\end{eqnarray}
Let $\frac{ma}{q^\alpha}=s+r$ and $\frac{b}{q^\alpha}=s^{'}+r^{'}$ where $s, s^{'} \in \mathbb{Z^+}$ and $0\leqslant r, r^{'} \leqslant1$
, then $\lfloor\frac{am}{q^i}\rfloor-\lfloor\frac{b}{q^i}\rfloor=s+s'$ and $\lfloor\frac{am-b}{q^i}\rfloor=s+s'$ or $s+s'-1$. Therefore \ref{eq} holds and this concludes the proof.
\end{proof}
\section{Conjecture \ref{Con1}}
Maxim Vsemirnov \cite{Max} proved that the conjecture \ref{Con1} is not true for $p=5$. He also  proved the following theorem:
\begin{theorem}
Let $p=5, a=4, b=2$. If $(\alpha,\beta)\in\{(0,0), (1,0),(1,1)\}$ then
\[{4n+\alpha \choose 2n+\beta}\equiv 0,1\hspace{0.2cm} \text{or}\hspace{0.2cm} 4\quad (\mbox{\rm mod~} 5);\]
If $(\alpha,\beta)\in\{(2,1), (3,1),(3,2)\}$ then
\[{4n+\alpha \choose 2n+\beta}\equiv 0,2\hspace{0.2cm} \text{or}\hspace{0.2cm} 3\quad (\mbox{\rm mod~} 5);\]
\end{theorem}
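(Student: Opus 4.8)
The plan is to reduce everything to Lucas' theorem in base $5$ and to recast the two alternatives in terms of the quadratic character modulo $5$. The starting observation is that $1,4$ are exactly the quadratic residues modulo $5$ while $2,3$ are the non-residues; writing $\left(\tfrac{\cdot}{5}\right)$ for the Legendre symbol, $\left(\tfrac{1}{5}\right)=\left(\tfrac{4}{5}\right)=+1$ and $\left(\tfrac{2}{5}\right)=\left(\tfrac{3}{5}\right)=-1$. Thus the claimed alternatives ``$\binom{4n+\alpha}{2n+\beta}\in\{0,1,4\}$'' and ``$\in\{0,2,3\}$'' say precisely that the binomial coefficient is $0\pmod 5$ or, when nonzero, a quadratic residue, respectively a non-residue. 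Since the Legendre symbol is multiplicative, Lucas' theorem converts the character of the binomial into the product of the characters of its base-$5$ digit binomials $\binom{i}{j}$ with $0\le j\le i\le 4$, and a one-line check shows that the only nonzero digit binomials of character $-1$ are $\binom{2}{1}=2$, $\binom{3}{1}=3$, and $\binom{3}{2}=3$; all other nonzero ones equal $1$ or $4$.

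First I would set up a digit-peeling recursion. Writing $n=5n'+n_0$ with $0\le n_0\le 4$ and applying Lucas' theorem once to strip the least significant base-$5$ digit gives
\[
\binom{4n+\alpha}{2n+\beta}\equiv\binom{a_0}{b_0}\binom{4n'+c}{2n'+c'}\pmod 5,
\]
where $a_0=(4n_0+\alpha)\bmod 5$, $c=\lfloor(4n_0+\alpha)/5\rfloor$, $b_0=(2n_0+\beta)\bmod 5$, and $c'=\lfloor(2n_0+\beta)/5\rfloor$. The crucial point is that the remainder binomial $\binom{4n'+c}{2n'+c'}$ has exactly the same shape, now with offset pair $(c,c')$ in place of $(\alpha,\beta)$. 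I would therefore treat $(\alpha,\beta)$ as the state of a finite automaton whose input is the digit $n_0$, whose output is the factor $\binom{a_0}{b_0}$, and whose transition sends $(\alpha,\beta)$ to $(c,c')$. Iterating down the base-$5$ digits of $n$ and halting when $n'=0$ writes $\binom{4n+\alpha}{2n+\beta}\pmod 5$ as the product of the factors read along the resulting path, the last factor being $\binom{c}{c'}$ at the terminal state.

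The central step is a finite verification. I would tabulate the transitions of the six states $\{(0,0),(1,0),(1,1),(2,1),(3,1),(3,2)\}$ over the five possible input digits and check: (a) the states reachable from these six are again exactly these six, with $(0,0)$ an absorbing terminal state; and (b) the sign assignment $\sigma(0,0)=\sigma(1,0)=\sigma(1,1)=+1$ and $\sigma(2,1)=\sigma(3,1)=\sigma(3,2)=-1$ satisfies $\chi=\sigma(s)\,\sigma(s')$ for every transition $s\to s'$ with nonzero output factor, where $\chi\in\{\pm1\}$ is that factor's character. Granting (a) and (b), the characters telescope along any path terminating at $(0,0)$, and the total character of $\binom{4n+\alpha}{2n+\beta}$ collapses to $\sigma(\alpha,\beta)$: it is $+1$ for the first triple of pairs and $-1$ for the second. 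Since a single vanishing factor forces $\binom{4n+\alpha}{2n+\beta}\equiv 0$ (which is allowed in both alternatives), this yields the theorem uniformly in $n$.

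The main obstacle is step (b): discovering the sign $\sigma$ that linearizes the character across the automaton. Once it is guessed, closure and the local identity reduce to a routine enumeration of the $6\times 5=30$ transitions, and the telescoping does all the remaining work with no separate induction or estimate. I would also double-check the boundary bookkeeping, namely that the carries never leave the ranges $0\le c\le 3$ and $0\le c'\le 2$ (so the state set stays finite), and that the terminal factor $\binom{c}{c'}$ is correctly incorporated as the final transition out of the last nonzero state.
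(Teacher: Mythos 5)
Your proposal is correct, but there is no internal proof to compare it against: the paper states this theorem without proof, quoting it from Vsemirnov \cite{Max}, so your argument has to stand on its own --- and it does. I carried out the finite verification you deferred. The carries do satisfy $0\le c\le 3$ and $0\le c'\le 2$ (since $4n_0+\alpha\le 19$ and $2n_0+\beta\le 10$ for the six pairs), and all thirty transitions of the states $\{(0,0),(1,0),(1,1),(2,1),(3,1),(3,2)\}$ under digits $n_0=0,\dots,4$ land back in this set; for instance $(0,0)$ goes to $(0,0),(0,0),(1,0),(2,1),(3,1)$ with output factors $\binom{0}{0}=1$, $\binom{4}{2}=6$, $\binom{3}{4}=0$, $\binom{2}{1}=2$, $\binom{1}{3}=0$. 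In every transition with a nonzero factor your identity $\chi=\sigma(s)\sigma(s')$ holds: the nonzero factor is $\equiv 1\pmod 5$ whenever the transition stays within one of the two classes $\{(0,0),(1,0),(1,1)\}$ ($\sigma=+1$) and $\{(2,1),(3,1),(3,2)\}$ ($\sigma=-1$), and is $2$ or $3$ (the non-residues) exactly on the class-crossing transitions; curiously, the residue $4$ never occurs as a factor. Your terminal bookkeeping is also right: the digit-$0$ transition out of $(\alpha,\beta)$ outputs precisely $\binom{\alpha}{\beta}$ and lands in the absorbing state $(0,0)$ with $\sigma=+1$, so the telescoped character collapses to $\sigma(\alpha,\beta)$, which together with the observation that $\{1,4\}$ and $\{2,3\}$ are the residues and non-residues modulo $5$ gives both alternatives uniformly in $n$. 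Compared with the cited source, Vsemirnov's note likewise rests on Lucas' theorem and base-$5$ digit analysis, in effect an induction preserving the same two classes of offset pairs; your quadratic-character telescoping is an elegant repackaging that replaces the case analysis of residue values by a single multiplicative sign, and the sign function needs no guessing in hindsight, since $\sigma(\alpha,\beta)$ is nothing but the Legendre symbol of $\binom{\alpha}{\beta}$ itself.
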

In the following, we give a proof for a special case of Conjecture \ref{Con1}
We know that if $\gcd(x,y)=1$ then there is an integer $1\leqslant x'\leqslant y-1$ such that $y\mid xx'-1$. We denote this $x'$ by $\mbox{Inv}_y(x)$. Moreover, for an integer $x$ we denote the $p$-adic valuation of $x$ by $v_p(x)$.
\begin{theorem}
Let $a$ and $b$ be positive integers with $a>b$, let $\alpha$ and $\beta$ be integers and let $d=\gcd(a,b), c=\frac{a}{d}, e=\gcd(p-1,a)$. Furthermore, let $p$ be a prime such that $p>a+2b$. Then 
\begin{itemize}
\item[i.] if $e< c$ or $v_2(a)\leqslant v_2(p-1)$ then for each $r=0,1,\ldots,p-1$, there are infinitely many positive integers $n$ such that 
\[{an+\alpha \choose bn+\beta}\equiv r\quad (\mbox{\rm mod~} p);\]
\item[ii.] if $e\geqslant c$ and $v_2(a)>v_2(p-1)$ then for each 
\[r\notin\{(2\mu-1)e+p+\alpha-2+r': 0\leqslant r'\leqslant e-c, \lceil\frac{e+2-p-\alpha}{2e}\rceil\leqslant\mu\leqslant\lceil\frac{c+1-\alpha}{2e}\rceil\},\]
there are infinitely many positive integers $n$ such that 
\[{an+\alpha \choose bn+\beta}\equiv r\quad (\mbox{\rm mod~} p).\]
\end{itemize}
\end{theorem}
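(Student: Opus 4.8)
The plan is to read off $\binom{an+\alpha}{bn+\beta}\bmod p$ from Lucas' theorem, using the hypothesis $p>a+2b$ to ensure that $a,b$ and $a-b$ are all single base-$p$ digits. I would first reduce the qualifier ``infinitely many $n$'' to ``at least one $n$''. Fix a target residue $r$; if some $n_0$ realises it, choose $K$ so large that $an_0+\alpha<p^{K}$. Then $a(n_0+p^{K})+\alpha$ is obtained from $an_0+\alpha$ by placing the digit $a$ in position $K$, and likewise $b$ for the lower entry, so Lucas' theorem gives $\binom{a(n_0+p^{K})+\alpha}{b(n_0+p^{K})+\beta}\equiv\binom ab\binom{an_0+\alpha}{bn_0+\beta}\pmod p$, where $\binom ab\not\equiv0$ since $b\le a<p$. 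Iterating this shift along exponents $K$ increasing in steps equal to the multiplicative order of $\binom ab$ modulo $p$ reproduces $r$ for infinitely many $n$. Hence it suffices to compute the set of residues attained as $n$ ranges over $\mathbb N$.

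For attainability itself I would force the base-$p$ digits of the two entries. Since $\gcd(a,p)=1$, for each $k$ solve $an+\alpha\equiv-1\pmod{p^{k}}$, so the $k$ lowest digits of $an+\alpha$ are all $p-1$; the residual freedom $n\mapsto n+p^{k}t$ then moves the next digit $A:=\lfloor(an+\alpha)/p^{k}\rfloor=A^{\ast}+at$ through an arithmetic progression of step $a$. For these $n$ the low digits of $bn+\beta$ are frozen at $R:=(ba^{-1}(-1-\alpha)+\beta)\bmod p^{k}$, while $\lfloor(bn+\beta)/p^{k}\rfloor=B^{\ast}+bt$. Keeping $t$ in the range where $A^{\ast}+at,\,B^{\ast}+bt<p$, Lucas' theorem collapses the coefficient, via $\binom{p-1}{R_i}\equiv(-1)^{R_i}$ on each frozen digit, to
\[\binom{an+\alpha}{bn+\beta}\equiv(-1)^{s_p(R)}\binom{A^{\ast}+at}{B^{\ast}+bt}\pmod p,\]
where $s_p(R)$ is the base-$p$ digit sum of $R$. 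Thus every residue $\pm\binom{A^{\ast}+at}{B^{\ast}+bt}$ is attained, and varying $k$ and the forced low pattern produces further sign-twisted families.

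It then remains to decide which residues these families omit. Writing $a=dc$ and $b=db'$ with $\gcd(c,b')=1$, the pair $(A,B)=(A^{\ast}+at,B^{\ast}+bt)$ runs along a line of slope $b/a=b'/c$, so the small binomials $\binom AB$ that occur should reduce, modulo $p$, to the values $\binom cj$ with $0\le j\le c$ and their multiplicative translates; I expect the number of distinct magnitudes to be controlled by $c=a/d$. The available signs, in turn, should be governed by $e=\gcd(p-1,a)$ together with the comparison of $v_2(a)$ and $v_2(p-1)$, the point being that $v_2(a)\le v_2(p-1)$ is exactly the condition for $-1$ to be an $a$-th power modulo $p$, hence for the opposite sign to be reachable. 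I therefore anticipate the stated dichotomy: if $e<c$ the magnitudes alone already fill $\{0,\dots,p-1\}$, and if $v_2(a)\le v_2(p-1)$ both signs are available, so in case (i) every $r$ occurs; while in case (ii) the sign is rigid and only the union of blocks $\{(2\mu-1)e+p+\alpha-2+r':0\le r'\le e-c\}$ is missed, with the width $e-c$ measuring the span of the $c$-family inside a block of length $e$, and the range of $\mu$ counting the translates by $2e$ that fit into $\{0,\dots,p-1\}$.

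The main obstacle will be precisely this last identification of the exceptional set, in both directions: showing that each listed residue is genuinely unattainable (a parity obstruction coming from $s_p(R)$ once $-1$ is not an $a$-th power) and, conversely, that every residue off the list is hit (a covering argument that the sign-twisted $c$-family exhausts the complement, with the boundary indices $\lceil(e+2-p-\alpha)/2e\rceil$ and $\lceil(c+1-\alpha)/2e\rceil$ marking where the blocks begin and end). Checking that these blocks neither overlap nor leave gaps, and that the two active digits never interfere, is the delicate bookkeeping, and is where the hypothesis $p>a+2b$ is really spent.
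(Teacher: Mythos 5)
Your opening reduction is correct and genuinely nice: if a single $n_0$ realises $r$, then for $p^{K}>an_0+\alpha$ Lucas' theorem gives $\binom{a(n_0+p^{K})+\alpha}{b(n_0+p^{K})+\beta}\equiv\binom{a}{b}\binom{an_0+\alpha}{bn_0+\beta}\pmod p$ with $\binom{a}{b}\not\equiv 0$ (since $p>a+2b$ forces $a,b<p$), and iterating in steps equal to the order of $\binom{a}{b}$ modulo $p$ keeps the residue, so one solution yields infinitely many; the paper instead builds infinitude into its construction by taking $u=N\varphi(a)$ with $N$ arbitrary. The digit-forcing step (solving $an+\alpha\equiv-1\pmod{p^{k}}$, sweeping the next digit by $n\mapsto n+p^{k}t$) and the resulting formula $\binom{an+\alpha}{bn+\beta}\equiv(-1)^{s_p(R)}\binom{A^{*}+at}{B^{*}+bt}\pmod p$ are also sound. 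But from that point on you have a research plan, not a proof: the assertions that the attainable magnitudes ``should reduce'' to the $\binom{c}{j}$ and their multiplicative translates, that the available signs are ``governed by'' $e$ and the comparison of $v_2(a)$ with $v_2(p-1)$, and that the omitted residues are exactly the blocks listed in case (ii), are all stated as expectations with no supporting argument. Those assertions are not loose ends; they are the entire content of the theorem.

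Moreover, the family you construct cannot by itself prove case (i). For fixed $k$ the top digit must satisfy $A^{*}+at<p$, so $t$ ranges over at most $\lfloor p/a\rfloor+1$ values, and the family produces at most about $2p/a$ residues even if both signs were available --- and they are not: the sign $(-1)^{s_p(R)}$ is determined by $k$ and the forced pattern, not freely chosen. For $a\geqslant 3$ this is fewer than $p$ residues, so full coverage is impossible from one such family, and to reach other residues you must allow low digits different from $p-1$, whereupon Lucas yields a long product of digit-factors whose value must be controlled. That control is precisely the crux, and the paper's mechanism for it is the idea your sketch lacks: it arranges $an+\alpha$ to have digits $p-2$ and $p-3$ at positions indexed by sets $\mathbb{I}_1,\mathbb{I}_2$ chosen so that each resulting factor $(b_{su}+1)$, respectively $(b_{su}+1)(b_{su}+2)$, occurs with multiplicity divisible by $p-1$; Fermat's little theorem then collapses the whole product to $1$, leaving the single controllable factor $-(1+t)\equiv r\pmod p$. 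The hypotheses $e<c$ or $v_2(a)\leqslant v_2(p-1)$, and the exceptional set in case (ii), enter exactly through the solvability of the congruence $K(p-1)(1+\varepsilon)\equiv c-t-\alpha-\rho\pmod a$ that makes this construction consistent. Without an analogue of this cancellation device, the classification you defer to ``delicate bookkeeping'' is not bookkeeping at all --- it is the theorem itself.
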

\begin{proof}
By Euler's totient theorem, we have $p^{\varphi(a)}\equiv1~ (\mbox{\rm mod~} a)$, since $\gcd(p,a)=1$. For an arbitrary positive integer $N$, put $u=N\varphi(a)$. Thus 
\[p^{ui}\equiv1~(\mbox{\rm mod~} a), \quad i\in\mathbb{N}.\]
In particular, there is an $m$ such that $p^u-1=am$. Thus $m=-\mbox{Inv}_p(a)$. Put $t=(p-1)-r$. Write $c-t-\alpha=\mu e+\rho$, where $0\leqslant \rho\leqslant e-1$. 
Note that $e\mid c-t-\alpha-\rho$. 
Suppose 
\[\varepsilon=\left\{\begin{array}{ll}
0 & \mbox{if~} \rho\leqslant c-2\\
1 & \mbox{otherwise~} 
\end{array}\right. \]
If $e<c$ then put
\begin{eqnarray*}
K&=&\frac{c-t-\alpha-\rho}{e}\cdot\big(p\mbox{Inv}_{\frac{a}{e}}(\frac{p(p-1)}e)\big)\\&&+(c-t-\alpha-\rho)am\mbox{Inv}_p(a+1)-(\beta-1)a^2m\mbox{Inv}_p(b(a+1))+Lpa,
\end{eqnarray*}
where $L$ is sufficiently large so that $K>1$. Note that $\varepsilon=0$ in this case, since $\rho\leqslant e-1\leqslant c-2$.

If $e\geqslant c$ and $v_2(a)\leqslant v_2(p-1)$ then put
\begin{eqnarray*}
K&=&\frac{c-t-\alpha-\rho}{e}\cdot\big(p\mbox{Inv}_{\frac{a}{e}}(\frac{p(p-1)(1+\varepsilon)}e)\big)\\&&+(c-t-\alpha-\rho)am\mbox{Inv}_p(a+1)-(\beta-1)a^2m\mbox{Inv}_p(b(a+1))+Lpa,
\end{eqnarray*}
where $L$ is sufficiently large so that $K>1$. Note that $\mbox{Inv}_{\frac{a}{e}}(1+\varepsilon)$ exists, since $\frac{a}{e}$ is odd in this case. 

Finally, if $e\geqslant c$ and $v_2(a)> v_2(p-1)$ then put
\begin{eqnarray*}
K&=&\frac{c-t-\alpha-\rho}{(1+\varepsilon)e}\cdot\big(p\mbox{Inv}_{\frac{a}{(1+\varepsilon)e}}(\frac{p(p-1)}e)\big)\\&&+(c-t-\alpha-\rho)am\mbox{Inv}_p(a+1)-(\beta-1)a^2m\mbox{Inv}_p(b(a+1))+Lpa,
\end{eqnarray*}
where $L$ is sufficiently large so that $K>1$. Note that $\frac{c-t-\alpha-\rho}{e}$ is even by our assumption on $r$ in this case. 

In each of the above cases we have
\begin{eqnarray*}
K(p-1)(1+\varepsilon)&\equiv& c-t-\alpha-\rho~(\mbox{mod~}a),\\
mb\big(K(p-1)(a+1+\varepsilon)-(c-t-\alpha-\rho)\big)&\equiv& \beta-1~(\mbox{mod~}p).
\end{eqnarray*}
Now choose
\[M=K(p-1)(d(c-1)+1)-(c-1)+\rho,\]
and
\begin{eqnarray*}
\mathbb{I}_2&=&\{M-k(c-1): k=0,1,\ldots, K(p-1)(d+\varepsilon)-1\},\\
 \mathbb{I}_1&=&\{1,2,\ldots,M\}\setminus\mathbb{I}_2.
\end{eqnarray*}
We have
\begin{eqnarray*}
&&p^{u(M+1)}-t-\sum_{i\in\mathbb{I}_1}p^{ui}-\sum_{i\in\mathbb{I}_2}2p^{ui}-\alpha\\
&\equiv& 1-t-(M-K(p-1)(d+\varepsilon))-2K(p-1)(d+\varepsilon)-\alpha\\
&=&1-t-\alpha-K(p-1)(a+1+\varepsilon)+(c-1)-\rho\\
&\equiv& c-t-\alpha-\rho-K(p-1)(1+\varepsilon)\\
&\equiv& 0 \quad(\mbox{mod~}a).
\end{eqnarray*}
Hence, there is a positive integer $n$ such that 
\[an+\alpha=p^{u(M+1)}-t-\sum_{i\in\mathbb{I}_1}p^{ui}-\sum_{i\in\mathbb{I}_2}2p^{ui}.\]
Write $an+\alpha$ in base $p$ as the form $\sum_{s=0}^{u(M+1)}a_sp^s$.  Then we have
\[a_s=\left\{\begin{array}{ll}
p-1-t & \mbox{if~} s=0\\
p-2 &  \mbox{if~} s=ui \mbox{~for some~}i\in\mathbb{I}_1\\ 
p-3 &  \mbox{if~} s=ui \mbox{~for some~}i\in\mathbb{I}_2\\ 
p-1 & \mbox{otherwise~} 
\end{array}\right. \]
We now aim at finding digits of $bn+\beta$ in base $p$. If $bn+\beta=\sum_{s=0}^{u(M+1)}b_sp^s$ then 
$b_s$ is the remainder of $\lfloor\frac{bn+\beta}{p^s}\rfloor$ mod $p$. In fact, we need to find $b_s$ for $s=0,u,2u,\ldots, Mu$.

We now have
\begin{eqnarray*}
bn+\beta&=&\frac{b}{a}\big(p^{u(M+1)}-t-\sum_{i\in\mathbb{I}_1}p^{ui}-
\sum_{i\in\mathbb{I}_2}2p^{ui}\big)-\frac{b}{a}\alpha+\beta \\
&=&\frac{b}{a}\big(p^{u(M+1)}-1-\sum_{i\in\mathbb{I}_1}(p^{ui}-1)-\sum_{i\in\mathbb{I}_2}2(p^{ui}-1)\big)\\
&&+\frac{b}{a}(1-t-(M-K(p-1)(d+\varepsilon))-2K(p-1)(d+\varepsilon)-\alpha)+\beta\\
&=&\frac{b}{a}\big(p^{u(M+1)}-1-\sum_{i\in\mathbb{I}_1}(p^{ui}-1)-\sum_{i\in\mathbb{I}_2}2(p^{ui}-1)\big)\\
&&+\beta-\frac{b}{a}\big(c-t-\alpha-\rho-K(p-1)(a+1+\varepsilon)\big).\\
\end{eqnarray*}
Thus
\begin{eqnarray*}
bn+\beta&\equiv&-mb\big(p^{u(M+1)}-1-\sum_{i\in\mathbb{I}_1}(p^{ui}-1)-\sum_{i\in\mathbb{I}_2}2(p^{ui}-1)\big)\\
&&+\beta-mb\big(K(p-1)(a+1+\varepsilon)-(c-t-\alpha-\rho)\big)\\
&\equiv&\beta-(\beta-1)\equiv 1~(\mbox{mod~}p).
\end{eqnarray*}
This shows that $b_0=1$. Given $s$, for $j=1,2$ let $I_{s,j}$ be the number of $i\in\mathbb{I}_j$ with $i\geqslant s$.
For $s\in\mathbb{I}_j$ we have
\begin{eqnarray*}
\lfloor\frac{bn+\beta}{p^{us}}\rfloor&=&\lfloor\frac{b}{a}\big(p^{u(M+1-s)}-1-\sum_{s\leqslant i\in\mathbb{I}_1}(p^{u(i-s)}-1)-\sum_{s\leqslant i\in\mathbb{I}_2}(2p^{u(i-s)}-2)\\
&&-\sum_{s>i\in\mathbb{I}_1}\frac{1}{p^{u(s-i)}}-\sum_{s>i\in\mathbb{I}_2}\frac{2}{p^{u(s-i)}}-\frac{t}{p^{us}}-I_{s,1}-2I_{s,2}+1\big)+\frac{\beta}{p^{us}}\rfloor\\
&=&\frac{b}{a}\big(p^{u(M+1-s)}-1-\sum_{s\leqslant i\in\mathbb{I}_1}(p^{u(s-i)}-1)-\sum_{s\leqslant i\in\mathbb{I}_2}(2p^{u(s-i)}-2)\big)\\
&&+\lfloor\frac{b}{a}\big(-\sum_{s>i\in\mathbb{I}_1}\frac{1}{p^{u(s-i)}}-\sum_{s>i\in\mathbb{I}_2}\frac{2}{p^{u(s-i)}}-\frac{t}{p^{us}}-I_{s,1}-2I_{s,2}+1\big)+\frac{\beta}{p^{us}}\rfloor\\
&\equiv& -mb(-1+I_{s,1}+2I_{s,2}-j)-\lfloor\frac{b}{a}(-1+I_{s,1}+2I_{s,2})\rfloor-1\quad(\mbox{mod~} p).
\end{eqnarray*}
Let $I_{s,1}+I_{s,2}-1=cq_s+r_s$, where $0\leqslant r_s<c$. Then for $s\in\mathbb{I}_j$ we have
\[b_{su}=mb(j-r_s)-\lfloor\frac{br_s}{a}\rfloor-1\equiv m(a\lfloor\frac{br_s}{a}\rfloor+a-b(r_s-j))\quad(\mbox{mod~}p).\]
Let us evaluate $r_s$ for $s\in\mathbb{I}_j$. If $j=2$ then $s=M-k(c-1)$ for some $k=0,1,\ldots, K(p-1)(d+\varepsilon)-1$. Thus
\[I_{s,1}=M-(M-k(c-1))+1-(k+1),\quad I_{s,2}=k+1.\]
So 
\[cq_s+r_s=k(c-1)+1+(k+1)-1\equiv1\quad(\mbox{mod~}c).\]
Hence $r_s=1$, whenever $s\in\mathbb{I}_2$. Note that we have $K(p-1)(d+\varepsilon)$ times occurrence of $r_s=1$.

Moreover, if $j=1$ then $s=M-k(c-1)-s'$ for some $k=0,1,\ldots,K(p-1)(d+\varepsilon)-1$ and $s'=1,2,\ldots,c-2$. Thus
\[I_{s,1}=M-(M-k(c-1)-s')+1-(k+1),\quad I_{s,2}=k+1.\]
So 
\begin{eqnarray*}
cq_s+r_s&=&k(c-1)+s'+1+(k+1)-1\equiv s'+1\quad(\mbox{mod~}c).
\end{eqnarray*}
Hence $r_s=2,\ldots,c-1$, whenever $s\in\mathbb{I}_1$. Note that we have $K(p-1)(d+\varepsilon-1)$ times occurrence of $r_s=\rho+2-\varepsilon(c-1),\ldots,c-1$ and $K(p-1)(d+\varepsilon)$ times occurrence of $r_s=2,\ldots,\rho+1-\varepsilon(c-1)$.

Now we show that if $s\in\mathbb{I}_1$ then $b_{su}+1\not\equiv 0~(\mbox{mod~}p)$ and if $s\in\mathbb{I}_2$ then $b_{su}+1,b_{su}+2\not\equiv 0~(\mbox{mod~}p)$.

Let $s\in\mathbb{I}_j$. Then
\[b_{su}+1\equiv m(a\lfloor\frac{br_s}{a}\rfloor-b(r_s-j))\quad(\mbox{mod~}p).\]
Now if $p\mid b_{su}+1$ then $p\mid a\lfloor\frac{br_s}{a}\rfloor-b(r_s-j)$. The latter holds if and only if $a\lfloor\frac{br_s}{a}\rfloor-b(r_s-j)=0$, since  
\[|b(r_s-j)-a\lfloor\frac{br_s}{a}\rfloor|\leqslant a(\frac{br_s}{a}-\lfloor\frac{br_s}{a}\rfloor)+jb\leqslant a+2b<p\]
Thus we should have $a\mid b(r_s-j)$ which implies that $c\mid r_s-j$. This is a contradiction, since $r_s<c$ and $r_s\neq j$ whenever $s\in\mathbb{I}_j$. 

Let $s\in\mathbb{I}_2$. Then
\[b_{su}+2\equiv m(a\lfloor\frac{br_s}{a}\rfloor-a-b(r_s-2))\quad(\mbox{mod~}p).\]
We know that $r_s=1$ whenever $s\in\mathbb{I}_2$. Thus if $p\mid b_{su}+2$ then we should have $p\mid a-b$. The latter is impossible since $p>a-b$.

We therefore have
\begin{eqnarray*}
{an+\alpha \choose bn+\beta}&\equiv&{p-1-t\choose b_0}\prod_{s\in\mathbb{I}_1}(b_{su}+1)\prod_{s\in\mathbb{I}_2}(b_{su}+1)(b_{su}+2)\\
&\equiv&-(1+t)\prod_{r_s=2}^{\rho+1-\varepsilon(c-1)}(b_{su}+1)^{K(p-1)(d+\varepsilon)}\prod_{r_s=\rho+2-\varepsilon(c-1)}^{c-1}(b_{su}+1)^{K(p-1)(d+\varepsilon-1)}\\
&&\cdot(mb)^{K(p-1)(d+\varepsilon)}(m(b-a))^{K(p-1)(d+\varepsilon)}\\
&\equiv&-(1+t)\\
&\equiv&-(1+(p-1)-r)\\
&\equiv& r\quad(\mbox{mod~}p).
\end{eqnarray*}
Note that there are infinitely many such $n$, since $N$ was arbitrary.
\end{proof}
\section{acknowledgments}
The authors would like to thank the anonymous referee for her/his careful reading and invaluable suggestions. This paper was written during the first author sabbatical period in the Department of Mathematics at the Vienna University. The authors wish to special
thanks to professor Christian Krattentheler and M. Farrokhi D. G for their helpful advices.

\end{document}